\newcommand{\eproof}{\hfill $\Box$}
\newcommand{\N}{{\mathbb{N}}}
\newcommand{\Stab}{{\rm Stab}}
\newcommand{\PSL}{{\rm PSL}}
\newcommand{\PGL}{{\rm PGL}}
\newcommand{\ord}{{\rm o}}
\newcommand{\Aut}{{\rm Aut}}
\newcommand{\Id}{{\rm Id}}
\newcommand{\Syl}{{\rm Syl}}
\newcommand{\la}{\langle}
\newcommand{\ra}{\rangle}
\newcommand{\td}{\widetilde}
\newtheorem{theorem}{Theorem}[section]
\newtheorem{lemma}[theorem]{Lemma}
\newtheorem{corollary}[theorem]{Corollary}
\newtheorem{definition}[theorem]{Definition}
{ \theoremstyle{remark} \newtheorem{remark}[theorem]{Remark}}
{ \theoremstyle{remark} \newtheorem{example}[theorem]{Example} }
\newcommand{\cal}{\mathcal}
 \newcommand{\CB}{\mbox{$\cal B$}}
 \newcommand{\CD}{\mbox{$\cal D$}}
 \newcommand{\CP}{\mbox{$\cal P$}}
 \newcommand{\CV}{\mbox{$\cal V$}}
\newcommand{\AT}{\mathbb{ATLAS}}
\begin{document}

\title[On automorphisms of designs] {On the automorphisms of designs constructed from finite simple groups}

\author{Tung Le and Jamshid Moori}

\address{T.L.: 
North-West University, Mafikeng, South Africa}

\address{J.M.: 
North-West University, Mafikeng, South Africa}

\email{T.L.: Lttung96@yahoo.com}

\email{J.M.: Jamshid.Moori@nwu.ac.za}

\date{\today}

\begin{abstract}
Here we study the automorphism groups of $1$-designs constructed from finite nonabelian simple groups by using two methods presented in Moori (Information Security, Coding Theory and Related Combinatorics, 2011) \cite{Moori}. We obtain some general results for both and improve one of these methods.
In an application to the sporadic Mathieu groups $M_{n}$, we are able to retrieve the Steiner systems $S(t,t+3,n)$ where $(n,t)\in\{(22,3),(23,4),(24,5)\}$.
\end{abstract}

\keywords{Automorphism, design, finite field, simple group, character theory}

\subjclass[2010]{05B05, 20B25, 20C15, 20D06, 20D08}

\maketitle

\section{Introduction}
It is well-known that $1$-designs have important applications in codding theory. Two methods for constructing designs from finite nonabelian simple groups were introduced by Moori and Key (see \cite{Moori}). Method 1 provides a construction of symmetric $1$-designs obtained from the primitive permutation representations. It was first introduced in \cite{keymoo,keymoocorrection} and summarized in \cite[Theorem 5]{Moori}. Notice that the primitive action of $G$ on $\Omega$ implies that the stabilizer $G_{\alpha}$ of a point $\alpha\in\Omega$ is a maximal subgroup of $G$.

\begin{theorem}[Key and Moori]
\label{thm-method1}
Let $G$ be a finite primitive permutation group acting on the set
$\Omega$ of size $n$. Fix an $\alpha \in \Omega$, let $\{\alpha\}\neq \Delta$ be an orbit of the stabilizer $G_{\alpha}$ of
$\alpha$ and $\CB = \{ \Delta^g: \,\, g \in G\}$.
Then $\CD=(\Omega, \CB)$ forms a  $1$-$(n,|\Delta|,|\Delta|)$ design
with $n$ blocks.
\end{theorem}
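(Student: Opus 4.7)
The plan is to verify the three parameters of a $1$-design—block size, number of blocks, and replication number—using only the orbit--stabilizer theorem together with primitivity. First, since $G$ acts on $\Omega$ by permutations, each image $\Delta^g$ has cardinality $|\Delta|$, so every block has size $k = |\Delta|$ without further argument.

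Next I would count $|\CB|$. By construction $\CB$ is the $G$-orbit of $\Delta$ under the induced action of $G$ on subsets of $\Omega$, hence $|\CB| = [G : \Stab_G(\Delta)]$, where $\Stab_G(\Delta)$ denotes the setwise stabilizer. Because $\Delta$ is a $G_\alpha$-orbit we have $G_\alpha \leq \Stab_G(\Delta)$. The primitivity hypothesis translates into $G_\alpha$ being a maximal subgroup of $G$, so there are only two options: $\Stab_G(\Delta)=G_\alpha$ or $\Stab_G(\Delta)=G$. In the second case $\Delta$ would be $G$-invariant, and transitivity of $G$ on $\Omega$ would force $\Delta=\Omega$. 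However, $\alpha$ is fixed by $G_\alpha$, so the $G_\alpha$-orbit $\{\alpha\}$ is disjoint from any other $G_\alpha$-orbit; since $\Delta\neq\{\alpha\}$ we obtain $\alpha\notin\Delta$, whence $\Delta\subsetneq\Omega$. Therefore $\Stab_G(\Delta)=G_\alpha$ and $|\CB| = [G:G_\alpha] = n$.

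Finally, for the replication number I would use a standard double count. Since $G$ is transitive on $\Omega$ and permutes $\CB$, the number of blocks containing any point $\beta\in\Omega$ is a constant $\lambda$, independent of $\beta$. Counting incident pairs $(\beta,B)$ with $\beta\in B\in\CB$ in two ways yields $n\lambda = |\CB|\cdot|\Delta| = n|\Delta|$, hence $\lambda=|\Delta|$. This gives a $1$-$(n,|\Delta|,|\Delta|)$ design with $n$ blocks, as claimed. The only step that requires any genuine thought is the maximality argument ruling out $\Stab_G(\Delta)=G$; this is where the hypotheses $\Delta\neq\{\alpha\}$ and the primitivity of $G$ on $\Omega$ combine, and the remainder is essentially bookkeeping.
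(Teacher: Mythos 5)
Your argument is correct. Note that the paper does not actually prove this theorem: it is quoted as a known result of Key and Moori (Theorem 5 of \cite{Moori}), so there is no internal proof to compare against. Your reasoning — the orbit count $|\CB|=[G:\Stab_G(\Delta)]$, the maximality of $G_\alpha$ forced by primitivity ruling out $\Stab_G(\Delta)=G$ (using $\alpha\notin\Delta$ because distinct $G_\alpha$-orbits are disjoint), and the double count $n\lambda=|\CB|\,|\Delta|$ for the replication number — is exactly the standard proof given in the cited sources, and every step checks out.
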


The second method introduces a technique  from which a large number of non-symmetric $1$-designs could be constructed. 
Denote by $g^G$ the conjugacy class of $g$ in a group $G$. For $M\leq G$, let $M^\times=M-\{1\}$ and denote by $1_M^G$ the permutation character afforded by the action of $G$ on the (left) cosets of $M$ in $G.$ It is known that $|g^G|=|G:C_G(g)|$ and $1_M^G(g)\in \N$. 
A $1$-design is constructed and known as follows \cite[Theorem 12]{Moori}.

\begin{theorem}[Moori]
\label{thm-method2}
Let $G$ be a finite nonabelian simple group, M be a maximal subgroup of $G$ and $g\in M^\times$.
Let $\CP=g^G$ and $\CB=\{(M\cap g^G)^y : y\in G\}$.
Then the structure $\CD=(\CP,\CB)$ is a $1$-$(|g^G|, |M\cap g^G|, 1_M^G(g))$ design.
\end{theorem}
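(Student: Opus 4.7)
My plan is to check that $\CD=(\CP,\CB)$ satisfies the two defining properties of a $1$-$(v,k,\lambda)$ design with $v=|g^G|$, $k=|M\cap g^G|$, and $\lambda=1_M^G(g)$. Constant block size is immediate, since conjugation by any $y\in G$ is a bijection on $G$, so every block $(M\cap g^G)^y$ has size exactly $|M\cap g^G|$. The substantive step is to count, for an arbitrary fixed $g_0\in g^G$, the number of distinct blocks containing $g_0$.

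The first reduction I would make is to rewrite the blocks in a more workable form. Since $g^G$ is $G$-invariant under conjugation,
\[
(M\cap g^G)^y=M^y\cap (g^G)^y=M^y\cap g^G,
\]
so the blocks are exactly the traces of the $G$-conjugates of $M$ on $g^G$. The crucial step is to identify the set-wise normalizer $N:=N_G(M\cap g^G)$. Clearly $M\le N$, so by maximality of $M$, either $N=M$ or $N=G$. If $N=G$, then $M\cap g^G$ would lie in every conjugate $M^y$, hence in $\bigcap_{y\in G}M^y$, a normal subgroup of $G$ contained in the proper subgroup $M$; simplicity of $G$ forces this intersection to be trivial, contradicting $1\ne g\in M\cap g^G$. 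Thus $N=M$. The same simplicity argument applied to $M$ itself gives $N_G(M)=M$, so the assignment $M^y\mapsto M^y\cap g^G$ is a well-defined bijection between conjugates of $M$ and blocks of $\CB$.

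From here the replication count is standard. The number of blocks containing $g_0$ equals the number of $G$-conjugates of $M$ that contain $g_0$. By the usual orbit-counting interpretation of the permutation character,
\[
1_M^G(g_0)=|\{xM\in G/M:x^{-1}g_0 x\in M\}|=|\{xMx^{-1}:g_0\in xMx^{-1}\}|,
\]
so this count is exactly $1_M^G(g_0)$; and since $1_M^G$ is a class function, $1_M^G(g_0)=1_M^G(g)$. This yields $\lambda=1_M^G(g)$.

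The step I expect to be the main obstacle is the normalizer computation $N_G(M\cap g^G)=M$: it is the only place where the simplicity of $G$ enters, and it is precisely what is needed to set up the bijection between blocks and conjugates of $M$ without over- or under-counting. Once this is in hand, both the constant block size and the replication number fall out of the familiar orbit-character formula.
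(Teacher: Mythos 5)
Your proposal is correct and follows essentially the same route as the paper's own justification: the paper simply cites \cite[Theorem 12]{Moori} and remarks that there are exactly $1_M^G(g)$ conjugates of $M$ containing $g$ and that the transitive conjugation action of $G$ on $g^G$ supplies the design structure, which is exactly your replication-number count via the permutation character. Your additional computations $N_G(M\cap g^G)=M$ and $N_G(M)=M$ merely make explicit the bijection between blocks and conjugates of $M$ that the paper's one-line sketch takes for granted.
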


In both methods $G$ acts on design $\CD$ by conjugation. This action is transitive on both points and blocks. If $G$ is simple, it can be embedded into the automorphism groups of the designs constructed by these methods, see Lemma \ref{lem:embedG}. With an ambition to understand more about finite simple groups we study the automorphism groups of these designs. They provide natural structures on which our favorite simple groups act faithfully and transitively. Notice that $G$ acts primitively on both points and blocks of designs constructed by Method 1, but $G$ only acts primitively on blocks of designs constructed by Method 2.

Method 1 has been applied to several sporadic simple groups and finite simple groups of Lie type of small rank by various authors. For example, after applying Method 1 to the Janko simple groups $J_1$ and $J_2$, Key and Moori conjectured that the automorphism group $\Aut(\CD)$ of $\CD$ sits in $\Aut(G).$ However, Rodrigues in his PhD thesis \cite{Rodrigues} showed that the conjecture is not true in general by counterexamples.
Thus, there are simple groups $G$ for which $\Aut(G)$ does not preserve the structure of $\CD$. Here, we explain this behavior in general and  classify all automorphisms of $G$ which can be lifted to $\Aut(\CD)$ and which can not for both methods. The lifted automorphisms form a subgroup of $\Aut(\CD)$, called $AD(G)$ in Theorem \ref{thm:AD1} and Theorem \ref{thm:AD2}.

A quick way to see  the design $(\CP,\CB)$ in Theorem \ref{thm-method2} with those parameters is that there are exactly $1_M^G(g)$ conjugates of $M$ containing $g$ and that the transitive conjugate action of $G$ on $g^G$ fulfils the structure of a design.
With designs $\CD$ constructed by using this method, some results towards the structure of $\Aut(\CD)$ have been carried out in \cite{Moori}, where  $G=\PSL(2,q)$, $M$ is a Borel subgroup of $G$. In these cases, the automorphism groups $\Aut(\CD)$ 
are either symmetric groups or wreath products of two symmetric groups. Therefore, we do not have much information on $\Aut(\CD)$ in general.

Recall that if $G$ acts on $\Omega$ and $\Lambda\subset \Omega$ then $G_\Lambda=\Stab_G(\Lambda)=\{g\in G:\Lambda^g=\Lambda\}$.
Here we refine the designs $\CD=(\CP,\CB)$ constructed by Method 2 by looking deeper into their structures, which allows us to define a new design $\CD_I=(\CP_I,\CB_I)$, called {\it reduced design} in Definition \ref{def:ReducedDesign}. The structure of $\CD_I$ is more concrete and more advantageous to study $G$ by the following meaning
\begin{itemize}
\item[i)] $\CD$ and $\CD_I$ have the same replication number and the same number of blocks, but $\CD_I$ has a smaller point set and each block has fewer points;
\item[ii)] $\Aut(\CD_I)\cong \Aut(\CD)/S$ for some normal subgroup $S$ of $\Aut(\CD)$;
\item[iii)] $G$ acts transitively and faithfully on both points and blocks of $\CD_I$, thus $G\leq \Aut(\CD_I)$;
\item[iv)] $\Stab_G(B)$ is maximal in $G$ and conjugate to $M$ for all blocks $B\in \CB_I$;
\item[v)] $|\Stab_G(x)|\leq |\Stab_G(y)|$ for all $x\in \CP$ and $y\in \CP_I$. Thus, the action of $G$ on the point set of $\CD_I$ is closer to being primitive.
\end{itemize}
As a result of this observation we obtain the following.

\begin{theorem}
\label{thm:Stab}
Let $G$ be a finite nonabelian simple group, $M$ a maximal subgroup of $G$ and $g\in M^\times$. Let $\CV:=\{M^x:x\in G \mbox{ such that } g\in M^x\}$, $\displaystyle{A:=\cap_{Q\in \CV}\ Q}$ and $S:=N_G(A)$. Then $C_G(g)\leq S$ and $g^S=A\cap g^G$.
\end{theorem}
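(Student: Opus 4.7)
The plan is to split the statement into two assertions and handle them in order. For the first assertion $C_G(g)\leq S$, the key observation is that conjugation by an element of $C_G(g)$ permutes $\CV$ setwise. Indeed, if $c\in C_G(g)$ and $Q=M^x\in\CV$, then $g^c=g\in M^x$, so applying $(-)^c$ gives $g\in M^{xc}=Q^c$, hence $Q^c\in\CV$. Applying the same argument to $c^{-1}$ shows $c$ acts as a permutation of $\CV$, and taking intersections yields $A^c=\bigcap_{Q\in\CV}Q^c=\bigcap_{Q\in\CV}Q=A$. Therefore $c\in N_G(A)=S$.

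For the second assertion, I would prove $g^S\subseteq A\cap g^G$ first. Note that $g\in A$ trivially, since $g\in Q$ for every $Q\in\CV$ by definition of $\CV$. For any $s\in S$ we have $A^s=A$, so $g^s\in A^s=A$; combined with the obvious $g^s\in g^G$ this yields the inclusion.

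The harder inclusion is $A\cap g^G\subseteq g^S$, and here is where I would put the main work. Let $h=g^y\in A\cap g^G$. The plan is to show that $y$ itself normalizes $A$. The crucial identity is that the set of conjugates of $M$ containing $h$ equals $\CV^y:=\{Q^y:Q\in\CV\}$. This follows from the equivalence: $h\in M^x$ iff $g\in M^{xy^{-1}}$ iff $M^{xy^{-1}}\in\CV$ iff $M^x\in\CV^y$. Now the assumption $h\in A$ says that every $Q\in\CV$ contains $h$, so $\CV$ is itself contained in the collection of conjugates of $M$ containing $h$, namely $\CV^y$. Thus $\CV\subseteq\CV^y$, and since $|\CV|=|\CV^y|$ by bijectivity of conjugation on the subgroup lattice, finiteness forces $\CV=\CV^y$. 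Intersecting gives $A=A^y$, so $y\in N_G(A)=S$ and therefore $h\in g^S$.

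The main obstacle I anticipate is the third step, specifically spotting that the membership condition $h\in A$ is exactly what upgrades to the set-level inclusion $\CV\subseteq\CV^y$, after which the equality $\CV=\CV^y$ comes for free from a counting argument. The rest of the argument is essentially bookkeeping about how conjugation interacts with intersections and centralizers, so no serious technical difficulty is expected beyond identifying this bijection between $\CV$ and the analogous collection for $h$.
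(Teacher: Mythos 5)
Your proof is correct, and it takes a noticeably more direct route than the paper. The paper obtains this theorem as a corollary of Lemma \ref{lem:Stab_Ix} about the reduced design: there one sets $I_g=\cap_i B_i$ with $B_i=M^{g_i}\cap g^G$, first identifies $\Stab_G(I_g)=N_G(A)$ by arguing that the stabilizer of $I_g$ permutes the blocks through $g$ and hence normalizes their intersection, gets $C_G(g)\leq N_G(A)$ the same way (part ii)), and then proves $I_g=g^{S}$ by a coset-counting argument $|S|=|C_G(g)|\,|I_g|$ which leans on the earlier fact that the sets $I_y$ partition the point set $g^G$ and are permuted by $G$. You bypass the design machinery entirely: your first two steps are essentially the paper's observations stripped of the design language, and your key step replaces the partition-plus-counting argument by the clean equivalence ``$h=g^y\in M^x$ iff $M^{xy^{-1}}\in\CV$'', so that $h\in A$ gives $\CV\subseteq\CV^y$ and finiteness upgrades this to $\CV=\CV^y$, whence $y\in N_G(A)$. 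What each approach buys: yours is self-contained, shorter, and makes clear that neither simplicity of $G$ nor maximality of $M$ is really used for this statement; the paper's yields additional structural information along the way (the identification $S=\Stab_G(I_g)$ inside $\Aut(\CD)$, the formula $|S|=|C_G(g)|\,|A\cap g^G|$, and the normal subgroups $H_g$, $H_gA$ of $S$) that is exploited in the applications to $\PSL(2,q^2)$ and the Mathieu groups.
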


In an application to the sporadic Mathieu groups $M_n$ where $n\in \{22,23,24\}$, we are able to retrieve the Steiner system $S(t,t+3,n)$, see Subsection \ref{Mathieu_app}. 
Furthermore, we discover $t$-$(n,t+1,16)$ designs whose automorphism groups are isomorphic to the automorphism groups of those Steiner systems $S(t,t+3,n)$.

The paper is organized as follows. In Section \ref{sec:general}, we prove some general results for both methods: the embedding of $G$ into the automorphism groups of designs and the introduction of reduced designs $\CD_I$. In Section \ref{sec:Method1}, we classify all automorphisms of $G$ which can be lifted to $\Aut(\CD)$ where $\CD$ is a design constructed by Method 1. In Section \ref{sec:Method2}, we study designs $\CD$ constructed by Method 2. First, we classify all automorphisms of $G$ which can be lifted to $\Aut(\CD)$. Next we study the stabilizer in $G$ of a point of the reduced design $\CD_I$, where Theorem \ref{thm:Stab} is proven, and we show some applications on the Mathieu simple groups. In the last section we compute all reduced designs of those constructed by Method 2 for the group $G=\PSL(2,q^2)$, $q$ odd, and its maximal subgroups $M\cong \PGL(2,q)$.

Our notation will be quite standard, and it is as in \cite{ak:book} for designs  and 
in \cite{atlas, Hup} for group theory and character theory. For the structure of
finite simple groups and their maximal subgroups we follow the $\AT$ notation \cite{atlas}.

\section{Results applicable for both methods}
\label{sec:general}
Consider the design $\CD$
constructed from a finite simple group $G$ by Method 1 or 2. In this
section  we aim to prove some general results regarding
the structure of $\Aut(\CD)$ and its relation with $\Aut(G).$

\begin{definition}
For each $x\in G$, we define $\td{x}$ acting on $\CD$ as follows:
$u\mapsto u^x$ for all points $u\in \CP=\Omega$, and $B\mapsto
B^x=\{u^x: u\in B\}$ for all blocks $B$ of $\CD$. We set
$\td{G}:=\{\td{x}:x\in G\}$.
\end{definition}

\begin{lemma}
\label{lem:embedG}
$\td{x}\in \Aut(\CD)$ for all $x\in G$. The homomorphism $G\rightarrow
\Aut(\CD), x\mapsto \td{x}$ is one-to-one and $G\cong \td{G}$.
\end{lemma}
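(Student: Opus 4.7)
The plan is to verify the three assertions separately; each one reduces to a standard property of the conjugation action of $G$.

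First I would show $\td{x}\in\Aut(\CD)$ by checking that $\td{x}$ is a bijection on $\CP$ and sends blocks to blocks. In both methods $\CP$ is a $G$-set by construction (the natural action on $\Omega$ in Method 1, the conjugation action on $g^G$ in Method 2), so $\td{x}$ is a bijection of $\CP$ with inverse $\td{x^{-1}}$. For blocks, in Method 1 a block $\Delta^y$ is sent to $(\Delta^y)^x=\Delta^{yx}$, still of the form $\Delta^z$ with $z\in G$; in Method 2 a block $(M\cap g^G)^y$ is sent to $(M\cap g^G)^{yx}$, using that $g^G$ is closed under conjugation. Hence $\td{x}$ permutes $\CB$ and therefore lies in $\Aut(\CD)$.

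Next I would establish that $x\mapsto\td{x}$ is a homomorphism by invoking the associativity relation $u^{xy}=(u^x)^y$ on points $u\in\CP$, together with the analogous relation on blocks; this yields $\td{G}\leq\Aut(\CD)$. For injectivity, the kernel $K$ of $x\mapsto\td{x}$ is a normal subgroup of the finite nonabelian simple group $G$, so $K=\{1\}$ or $K=G$; it suffices to rule out $K=G$. In Method 1 this follows from primitivity (hence non-triviality) of the $G$-action on $\Omega$. In Method 2 the hypothesis $g\in M^\times$ forces $g\neq 1$, and since $Z(G)=\{1\}$ we conclude $|g^G|\geq 2$, so once more some element of $G$ moves some point of $\CP$. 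Therefore $K=\{1\}$, and consequently $G\cong\td{G}$.

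I do not anticipate any substantive obstacle; the content is simply that the conjugation action of $G$ on the incidence structure is well-defined and faithful. The only spot asking for slight care is the block verification in Method 2, where one needs $g^G$ fixed setwise under conjugation to conclude that $(M\cap g^G)^{yx}$ is still of the parametric form defining $\CB$. Everything else is a routine use of the simplicity of $G$ and the standard first isomorphism theorem.
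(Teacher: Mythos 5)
Your proposal is correct and follows essentially the same route as the paper: the block computation is identical, and your "kernel is a proper normal subgroup of a simple group, hence trivial" argument is just a repackaging of the paper's observation that $\bigcap_{g\in G}g^{-1}Mg=\{1\}$ (Method 1) and $\bigcap_{h\in g^G}C_G(h)=Z(G)=\{1\}$ (Method 2). The only cosmetic difference is that you appeal to nontriviality of the transitive action to rule out $K=G$, where the paper exhibits an explicit unmoved-point contradiction; both hinge on simplicity in the same way.
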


\proof
\textbf{Method 1}: Assume that $\CD$ is a $1$-$(|\CP|,
|\Delta|,|\Delta|)$ design constructed by Method 1. If $M=G_\alpha$ and $B=\Delta^g$ for some
$g\in G$, then $B^x=(\Delta^g)^x=\Delta^{gx}\in \mathcal{B}$, it is
clear that $\td{x}\in \Aut(\CD).$ Notice that for all $\beta\in \CP$ and $x\in G$, $G_{\beta^{\td{g}}}=G_\beta^g$ is conjugate to $M$.

Now we show that this embedding is one-to-one by proving that $\td{x}\neq \td{y}$ for
all $x\neq y\in G$. Since $G$ is simple, there exists  $u\in
\CP$ such that $xy^{-1}\not\in G_u,$ because if $xy^{-1}\in
G_u$ for all $u\in \CP,$ then $xy^{-1}\in \bigcap_{u\in\mathcal{P}}G_u=\bigcap_{g\in
G}^{} g^{-1}Mg=\{1_G\}$ which implies $x=y.$ Thus $u^x\neq u^y$,
which implies $\td{x}\neq \td{y}.$

\smallskip
\textbf{Method 2}: Assume that $\CD$ is the design constructed by Method 2 from $g\in M^\times$. Since the points in $\CD$ are a conjugacy class of
$G$ and if a block $B= g^G\cap M$, then $B^x=g^G\cap M^x$, it is
clear that $\td{x}\in \Aut(\CD)$. We shall show that this
embedding is one-to-one by proving that $\td{x}\neq \td{y}$ for all
$x\neq y\in G$.

Since $G$ is simple, there exists  $h\in g^G$ such that
$xy^{-1}\not\in C_G(h),$ because if  $xy^{-1}\in C_G(h)$ for all
$h \in g^G,$ then $xy^{-1}\in Z( \la g^G \ra)=Z(G)=\{1_G\}$ which implies
$x=y.$ Thus $h^x\neq h^y$, which implies $\td{x}\neq \td{y}.$
\eproof

\begin{remark}
{\rm
The proof in Lemma \ref{lem:embedG} for Method 1 also works for a primitive permutation group which acts faithfully on $\Omega$.}
\end{remark}

\begin{definition}
\label{def:Ix+SIx}
For a point $x\in \CP$ of the  $1$-$(v,k,\lambda)$ design $\CD$,
let $B_1,...,B_\lambda$ be all distinct blocks containing $x$. We define
$$I_x:=\cap_{i=1}^\lambda B_i\ .$$ We call $x$ a representative of
the set $I_x$, i.e. $I_x=I_y$ for all $y\in I_x$.

For every $\sigma\in S_{I_x}$, the symmetric group of $I_x$, we
define $\td{\sigma}$ acting on $\CP$ as follows.
$$\td{\sigma}(u):=\left\{\begin{array}{ll} \sigma(u)& \mbox{ if } u\in I_x,\\ u & \mbox{otherwise.}\end{array}\right.$$

Set $\td{S}_{I_x}:=\{\td{\sigma}: \sigma \in S_{I_x}\}$.
\end{definition}

\begin{lemma}
The following hold.
\begin{itemize}
\item[1)] For all $x,y\in \CP$, there is $h\in G$ such that
$\td{h}(I_x)=I_y$;
\item[2)] $\td{\sigma} \in\Aut(\CD)$.
\end{itemize}
\end{lemma}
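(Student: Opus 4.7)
The plan is to exploit two ingredients: the transitivity of the $G$-action on $\CP$ (which in Method 1 comes from the primitive action of $G$ on $\Omega$, and in Method 2 from the conjugation action on $g^G$), together with the ``closure'' observation from Definition \ref{def:Ix+SIx} that $I_y=I_x$ for every $y\in I_x$.

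For part 1, I would first choose $h\in G$ with $x^h=y$, so that $\td{h}(x)=y$. Letting $B_1,\dots,B_\lambda$ be the blocks through $x$, Lemma \ref{lem:embedG} tells us that $\td{h}\in\Aut(\CD)$, so $B_1^h,\dots,B_\lambda^h$ are $\lambda$ distinct blocks, each containing $y$. Since there are exactly $\lambda$ blocks of $\CD$ through $y$, these must be all of them, and intersecting yields
$$\td{h}(I_x)=\bigcap_{i=1}^\lambda B_i^h=I_y.$$

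For part 2, I would prove the stronger statement that $\td{\sigma}$ setwise fixes \emph{every} block, which immediately places $\td{\sigma}$ in $\Aut(\CD)$. Fix a block $B$ and split on whether $B\cap I_x=\emptyset$ or not. If $B\cap I_x=\emptyset$ then $\td{\sigma}$ acts as the identity on $B$, so $\td{\sigma}(B)=B$. Otherwise pick $y\in B\cap I_x$: by the closure observation $I_x=I_y$, and because $B$ is one of the $\lambda$ blocks through $y$ defining $I_y$, I obtain $I_x=I_y\subseteq B$. Since $\td{\sigma}$ permutes $I_x$ within itself and fixes $\CP\setminus I_x$ pointwise, this forces $\td{\sigma}(B)=B$.

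The only real obstacle is the second case of part 2: \emph{a priori} a block $B$ might meet $I_x$ without containing all of $I_x$, in which case $\td{\sigma}$ could conceivably move points of $B$ to points outside $B$. The equality $I_y=I_x$ for $y\in I_x$ is precisely what rules this out, collapsing the nontrivial case to the trivial observation that $\td{\sigma}$ acts as a permutation of a subset of $B$.
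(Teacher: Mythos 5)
Your proposal is correct and follows essentially the same route as the paper: part 1 uses transitivity of $G$ on $\CP$ together with $\td{h}(I_x)=\cap_i B_i^h$ and the fact that the $\lambda$ blocks through $x$ map onto the $\lambda$ blocks through $y$, and part 2 shows $\td{\sigma}$ fixes every block setwise via the closure property $I_y=I_x$ for $y\in I_x$. You merely spell out the details that the paper compresses into ``clear by the design properties'' and ``incidences are preserved.''
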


\proof Since $G$ acts transitively on $\CP$ and
$\td{h}(I_x)=(\cap_{i=1}^\lambda B_i)^h=\cap_{i=1}^\lambda {B_i}^h$
for all $h\in G$, the first claim is clear by the  design
properties. The second claim follows by $\td\sigma(B)=B$ for all
blocks $B$ of $\CD$ and the fact that incidences are preserved. \eproof

\medskip
By the above lemma, $v/|I_x|\in\N$. Let $t:=v/|I_x|$. The points in the  $1$-$(v,k,\lambda)$ design $\CD$
are partitioned into $t$ parts, denoted $\{I_{x_i}:i \in [1,t]\}$. This fact leads to
the following corollary.

\begin{corollary}
\label{cor:SI_construction}
The group $\td{S}_{I_{x_1}}\times ... \times \td{S}_{I_{x_t}}\leq \Aut(\CD)$.
\end{corollary}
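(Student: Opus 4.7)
The plan is to package three elementary observations into the standard internal direct product criterion. First, by the preceding lemma we already know $\td{S}_{I_{x_i}}\leq \Aut(\CD)$ for each $i\in[1,t]$, so each factor individually sits inside the automorphism group. The content of the corollary is that these $t$ factors fit together as a direct product inside $\Aut(\CD)$.

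Next, I would exploit the partition of $\CP$ into the sets $I_{x_1},\ldots,I_{x_t}$ (established in the paragraph preceding the statement). By the definition of $\td{\sigma}$ in Definition \ref{def:Ix+SIx}, any $\td{\sigma}\in \td{S}_{I_{x_i}}$ acts as the identity on $\CP\setminus I_{x_i}$, in particular on every $I_{x_j}$ with $j\neq i$. Hence for $\td{\sigma}\in \td{S}_{I_{x_i}}$ and $\td{\tau}\in \td{S}_{I_{x_j}}$ with $i\neq j$, both $\td{\sigma}\td{\tau}$ and $\td{\tau}\td{\sigma}$ coincide with the permutation that applies $\sigma$ on $I_{x_i}$, $\tau$ on $I_{x_j}$, and the identity elsewhere; this shows that distinct factors commute pointwise. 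Combined with the closure of $\Aut(\CD)$ under multiplication, the subgroup they jointly generate consists of all products $\td{\sigma_1}\td{\sigma_2}\cdots\td{\sigma_t}$ with $\sigma_i\in S_{I_{x_i}}$.

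To identify this subgroup as an internal direct product, I would verify the trivial-intersection condition: if some $\td{\sigma}\in \td{S}_{I_{x_i}}$ equals a product $\td{\tau_1}\cdots\td{\tau_{i-1}}\td{\tau_{i+1}}\cdots\td{\tau_t}$ with $\tau_j\in S_{I_{x_j}}$, then restricting both sides to $I_{x_i}$ yields $\sigma=\mathrm{id}$, since each $\td{\tau_j}$ is the identity on $I_{x_i}$. Therefore $\td{S}_{I_{x_i}}\cap \prod_{j\neq i}\td{S}_{I_{x_j}}=\{1\}$ for every $i$. Together with the commuting relations, this is exactly the internal direct product criterion, so
\[
\td{S}_{I_{x_1}}\times \cdots \times \td{S}_{I_{x_t}}\leq \Aut(\CD).
\]

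There is essentially no main obstacle here; the only subtle point to check is that the support sets $I_{x_i}$ are genuinely disjoint, i.e.\ that the sets $\{I_x:x\in\CP\}$ form a partition of $\CP$. This is the content of the sentence preceding the corollary and follows from the relation ``$I_x=I_y$ for all $y\in I_x$'' in Definition \ref{def:Ix+SIx} together with the transitive $G$-action, so I would just invoke it rather than reprove it.
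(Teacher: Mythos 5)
Your proof is correct and follows exactly the route the paper intends: the paper states this corollary without proof, as an immediate consequence of Lemma~2.5(2) (each $\td{\sigma}\in\Aut(\CD)$) together with the fact that the sets $I_{x_1},\ldots,I_{x_t}$ partition $\CP$, which is precisely the disjoint-support argument you spell out. Your explicit verification of the internal direct product criterion (pairwise commutation and trivial intersection from disjoint supports) is just a fleshed-out version of what the paper leaves implicit.
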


\begin{definition}
We set $S(I):=\td{S}_{I_{x_1}}\times ... \times \td{S}_{I_{x_t}}$.
\end{definition}

It is clear that all elements in $S(I)$ fix all blocks of $\CD$, i.e.
$\tau(B)=B$ for all $\tau \in S(I)$ and $B\in\CB$. The converse is considered in the following lemma.

\begin{lemma}
\label{lem:SI_prop}
We have
\begin{itemize}
\item[i)] $S(I)\lhd \Aut(\CD)$.
\item[ii)] If $f\in\Aut(\CD)$ fixes all blocks of $\CD$ then $f\in S(I)$.
\item[iii)]  $S(I)=\cap_{B\in \CB}\ \Stab_{\Aut(\mathcal{D})}(B)$, where
$\CB$ is the block set of design $\CD$.
\end{itemize}
\end{lemma}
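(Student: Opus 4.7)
The plan is to prove the three assertions in the order (ii), (iii), (i), letting each feed into the next. The hard part is (ii); once it is in place, (iii) is essentially a packaging lemma and (i) is a one-line corollary of (iii).

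For (ii), I would start from an $f\in\Aut(\CD)$ that fixes every block setwise and examine its effect on a point $x\in\CP$. Each of the $\lambda$ blocks $B_1,\dots,B_\lambda$ through $x$ is preserved, so $f(x)\in B_i$ for all $i$, giving $f(x)\in \bigcap_i B_i=I_x$. Using the observation recorded in Definition \ref{def:Ix+SIx} that $I_y=I_x$ for every $y\in I_x$ (which, if needed, is justified by a counting check: both $x$ and $y$ lie in exactly $\lambda$ blocks, and the $\lambda$ blocks through $x$ already contain $y$), I conclude $f(I_x)\subseteq I_x$, hence $f|_{I_x}$ is a permutation of the finite set $I_x$. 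Since the parts $I_{x_1},\dots,I_{x_t}$ partition $\CP$, $f$ is the product of these restrictions, i.e.\ a member of $S(I)=\td S_{I_{x_1}}\times\cdots\times \td S_{I_{x_t}}$.

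For (iii), the inclusion $S(I)\subseteq \bigcap_{B\in\CB}\Stab_{\Aut(\CD)}(B)$ reduces to showing each generator $\td\sigma\in \td S_{I_{x_i}}$ stabilises every block $B$. Here I would split by cases: either $B\cap I_{x_i}=\emptyset$, in which case $\td\sigma$ fixes $B$ pointwise, or $B$ contains some $y\in I_{x_i}$, in which case $B$ is one of the $\lambda$ blocks through $y$, which are exactly the $\lambda$ blocks through $x_i$ (again using $I_y=I_{x_i}$), so $B\supseteq I_{x_i}$ and $\td\sigma$ merely permutes the fibre $I_{x_i}\subseteq B$. The reverse inclusion is simply the statement of (ii).

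Finally, for (i), normality is immediate from the characterisation in (iii): for $\phi\in\Aut(\CD)$, since $\phi$ permutes $\CB$,
$$\phi\, S(I)\,\phi^{-1} \;=\; \bigcap_{B\in\CB}\phi\,\Stab_{\Aut(\CD)}(B)\,\phi^{-1} \;=\; \bigcap_{B\in\CB}\Stab_{\Aut(\CD)}\!\bigl(\phi(B)\bigr) \;=\; \bigcap_{B\in\CB}\Stab_{\Aut(\CD)}(B) \;=\; S(I).$$
The only real obstacle is the passage $f(x)\in I_x\Rightarrow f(I_x)=I_x$ inside (ii); everything else is bookkeeping that rests on the partition $\CP=\bigsqcup_i I_{x_i}$ and the block-permutation action of $\Aut(\CD)$.
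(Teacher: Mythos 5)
Your proof is correct, and in substance it is the argument the paper compresses into one line. For parts (ii) and (iii) your reasoning --- a block-fixing $f$ sends $x$ into every block through $x$, hence into $I_x$, and conversely each $\td\sigma$ fixes a block $B$ because $B$ either misses $I_{x_i}$ or, containing some $y\in I_{x_i}$, must contain all of $I_{x_i}=I_y$ --- is exactly the content behind the paper's ``clear by the restriction of $f$ to the point set which is partitioned as $\CP=I_{x_1}\cup\dots\cup I_{x_t}$,'' so there you are filling in omitted details rather than taking a new route; your counting justification of $I_y=I_x$ for $y\in I_x$ is also the right way to ground the claim asserted in Definition \ref{def:Ix+SIx}. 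The one genuine divergence is part (i): the paper proves normality directly on generators, checking that $f\tau f^{-1}\in\td{S}_{I_{f(x)}}\leq S(I)$ for $\tau\in\td{S}_{I_x}$ (which tacitly uses that any $f\in\Aut(\CD)$ carries $I_x$ onto $I_{f(x)}$), whereas you deduce normality from (iii), since an intersection of stabilizers over the $\Aut(\CD)$-invariant family $\CB$ is automatically normal. Both are sound; the paper's version identifies the exact factor of $S(I)$ into which a conjugate falls, while yours needs no control of $f(I_x)$ and makes the logical dependence (ii) $\Rightarrow$ (iii) $\Rightarrow$ (i) explicit. No gaps.
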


\proof i) It suffices to check that $f\tau f^{-1} \in S(I)$ for all
$\tau\in \td{S}_{I_x}$ and $f\in \Aut(\CD)$. It is clear by checking
directly that $f\tau f^{-1} \in \td{S}_{I_{f(x)}}\leq S(I).$

ii--iii) The claim is clear by the restriction of $f$ to the point set
which is partitioned as
$\mathcal{P}=I_{x_1}\cup...\cup I_{x_t}$.
\eproof

\begin{remark}{\rm
If $\CD=(\CP,\CB)$ is a $1$-$(v,k,\lambda)$ design, from Corollary \ref{cor:SI_construction}, we let $\CP_I:=\{I_{x_i}:i\in [1,t]\}$ 
 and 
$\CB_I:=\{\bar{B}:B \in \CB\}$ where $\bar{B}=\{I_{x_i}:I_{x_i}\subset B\}$ for each $B\in\CB$.
Then $(\CP_I,\CB_I)$ forms a 1-design with parameters $1$-$(v/|I_x|,k/|I_x|,\lambda)$.
}\end{remark}

\begin{definition}
\label{def:ReducedDesign}
If $|I_x|>1$, we call the $1$-$(v/|I_x|,k/|I_x|,\lambda)$ design constructed above the \textit{reduced design} $\CD_I$ of $\CD$.

\end{definition}

\begin{remark}
{\rm
i) By Lemma \ref{lem:SI_prop}, for every $f\in \Aut(\CD)$, if $f\not\in S(I)$ then there exists at least one
$B\in \CB$ such that $f(B)\neq B$.

ii) Since $\Aut(\CD)/S(I)$ acts faithfully on the design $\CD_I$, $\Aut(\CD)/S(I)\leq \Aut(\CD_I)$. It is easy to see that each element of $\Aut(\CD_I)$ can be lifted naturally to an element of $\Aut(\CD)$, which gives the proof of the following theorem.
}
\end{remark}

\begin{theorem}
\label{thm:Aut_DI}
If $\Aut(\CD)$ acts transitively on the point set and the block set, then
 $\Aut(\CD_I)\cong \Aut(\CD)/S(I)$.
\end{theorem}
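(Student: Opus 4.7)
The plan is to exhibit a group homomorphism $\phi:\Aut(\CD)\to \Aut(\CD_I)$ with kernel $S(I)$, show that it is surjective, and conclude with the first isomorphism theorem.

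First I would verify that any $f\in\Aut(\CD)$ induces a well-defined map $\bar f$ on $\CD_I$. Because $y\in I_x$ iff every block through $x$ contains $y$, and $f$ permutes blocks, we have $f(y)\in I_{f(x)}$; applying the same to $f^{-1}$ gives $f(I_x)=I_{f(x)}$. So setting $\bar f(I_x):=I_{f(x)}$ defines a permutation of $\CP_I$. On blocks, since every $B\in\CB$ is a union of equivalence classes $I_{x_i}$, so is $f(B)$, and we set $\bar f(\bar B):=\overline{f(B)}$. That $\bar f$ is an incidence-preserving bijection, hence an element of $\Aut(\CD_I)$, is immediate from $f\in\Aut(\CD)$. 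Defining $\phi(f):=\bar f$ then gives a homomorphism.

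Next I would compute $\ker\phi$. Suppose $\bar f=\Id_{\CD_I}$. Acting trivially on points of $\CD_I$ means $f(I_x)=I_x$ for every $x$, i.e.\ $f$ preserves each equivalence class setwise. Acting trivially on blocks of $\CD_I$ means $\overline{f(B)}=\bar B$ for every $B\in\CB$; since two blocks of $\CD$ with equal reductions are equal as unions of the same $I_{x_i}$'s, this forces $f(B)=B$ for all $B\in\CB$. By Lemma~\ref{lem:SI_prop}(ii) we get $f\in S(I)$. Conversely, every element of $S(I)$ fixes each block of $\CD$ (by construction) and permutes each $I_{x_i}$ within itself, so induces $\Id$ on $\CD_I$. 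Hence $\ker\phi=S(I)$.

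The main step is surjectivity of $\phi$. Here the transitivity hypothesis is used: since $\Aut(\CD)$ acts transitively on $\CP$ and sends each $I_x$ onto some $I_y$, all classes $I_{x_i}$ share a common cardinality $m$. Given $\sigma\in\Aut(\CD_I)$, for each class $I_{x_i}$ pick any bijection $f_i:I_{x_i}\to \sigma(I_{x_i})$ (possible since both have size $m$), and assemble them into a permutation $f$ of $\CP$. For any block $B\in\CB$, $\bar B\in\CB_I$ is sent by $\sigma$ to some $\bar{B'}\in\CB_I$ with $B'\in\CB$, and by construction $f(B)=\bigcup_{I_{x_i}\subset B}\sigma(I_{x_i})=B'$; thus $f\in\Aut(\CD)$ and $\bar f=\sigma$. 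The only delicate point is that different choices of the $f_i$ yield lifts differing by an element of $S(I)=\ker\phi$, so the construction is coherent, and the first isomorphism theorem gives $\Aut(\CD_I)\cong\Aut(\CD)/S(I)$. The main obstacle is really just this bookkeeping of how a block in $\CB$ corresponds bijectively to its reduction in $\CB_I$; once that injectivity of $B\mapsto \bar B$ is pinned down (from every block being the disjoint union of its equivalence classes), the rest is formal.
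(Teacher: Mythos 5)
Your proposal is correct and follows essentially the same route as the paper, which proves the theorem via the preceding remark: the induced action gives $\Aut(\CD)/S(I)\leq\Aut(\CD_I)$ (your kernel computation, resting on Lemma \ref{lem:SI_prop}), and surjectivity comes from lifting each automorphism of $\CD_I$ back to $\CD$ by choosing bijections between the equal-sized classes $I_{x_i}$. You merely supply the details (injectivity of $B\mapsto\bar B$, well-definedness of $\bar f$) that the paper leaves as "easy to see."
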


\section{$\Aut(\CD)$: Method 1}
\label{sec:Method1}

Consider the $1$-$(|\Omega|, |\Delta|,|\Delta|)$ design $\CD$ constructed from a
finite simple group $G$ by using Method 1. In this section  we study the automorphisms of $G$ that could be lifted up to be automorphisms of $\CD$.
Recall that $M=G_{\alpha}$, a maximal subgroup of $G$. Here $\CP=\Omega$ and $\Delta$ can be chosen by $\CP:=\{M^g:g\in G\}$ and $\Delta:=\{M^{g_0g}:g\in M\}$ for some $g_0\in G-M$.

For $\phi\in \Aut(G)$ we define the map $\td{\phi}$  
by $\td{\phi}(u):=\phi(u)$ and $\td{\phi}(B):=\{\phi(v): v\in B\}$ for
all points $u\in \CP$ and blocks $B\in\CB$ of $\CD$.

If $\phi(M)$ is not conjugate to $M$ (in $G$), then $\phi$ can not be lifted to $\Aut(\CD)$ because $\td\phi$ does not preserve the point set $\CP$. For $B = \Delta^y\in \CB$ we have $\td\phi(B)=\td\phi(\Delta)^{\phi(y)}$. To find whether or not that $\td\phi\in \Aut(\CD)$ it suffices to study when $\td\phi(\Delta)\in\CB$.

\begin{lemma}
\label{lem:Med1Aut}
Let $\phi\in\Aut(G)$. The following hold.
\begin{itemize}
\item[i)] If $\phi(M)=M^x$ for some $x\in G$ then either $\td\phi(\Delta)=\Delta^x$ or $\td\phi(\Delta)\cap\Delta^x=\emptyset$;
\item[ii)] $\td\phi\in\Aut(\CD)$ iff $\phi(M)=M^x$ and $\td\phi(\Delta)=\Delta^x$ for some $x\in G$.
\end{itemize}
\end{lemma}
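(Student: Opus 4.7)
The plan is to exploit the fact that $\td\phi$ intertwines the $G$-action on $\CP$ with itself via $\phi$: for all $u\in\CP$ and $g\in G$, one checks directly that $\td\phi(u^g)=\phi(u)^{\phi(g)}$. I would first record that, because $G$ is simple and $M$ is maximal, $M$ is self-normalizing, so the stabilizer of the point $M\in\CP$ under conjugation is exactly $M$. Consequently $\Delta$ is an $M$-orbit on $\CP\setminus\{M\}$, and by the intertwining property, $\td\phi(\Delta)$ would automatically be an orbit of $\phi(M)$ on $\CP\setminus\{\phi(M)\}$.

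For part (i), I would assume $\phi(M)=M^x$. Then the $M^x$-orbits on $\CP\setminus\{M^x\}$ are precisely the $x$-conjugates of the $M$-orbits on $\CP\setminus\{M\}$, so $\td\phi(\Delta)=(\Delta')^x$ for some $M$-orbit $\Delta'$. Since distinct $M$-orbits are disjoint, the conclusion splits into the two cases $\Delta'=\Delta$, giving $\td\phi(\Delta)=\Delta^x$, and $\Delta'\cap\Delta=\emptyset$, giving $\td\phi(\Delta)\cap\Delta^x=\emptyset$ after conjugating by $x$.

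For part (ii), the ``if'' direction is a short computation: $\td\phi(\Delta^y)=\td\phi(\Delta)^{\phi(y)}=\Delta^{x\phi(y)}\in\CB$, which shows $\td\phi$ maps blocks to blocks and hence lies in $\Aut(\CD)$. For the converse, I would suppose $\td\phi\in\Aut(\CD)$, so $\td\phi(\Delta)$ is a block, say $\td\phi(\Delta)=\Delta^x$ for some $x\in G$; the remaining task is to show that the same $x$ satisfies $\phi(M)=M^x$. My idea is to compare the setwise stabilizers in $G$: intertwining gives $\Stab_G(\td\phi(\Delta))=\phi(\Stab_G(\Delta))=\phi(M)$, while $M^x\leq \Stab_G(\Delta^x)$ and maximality of $M^x$, combined with the fact that $\Delta^x$ is a proper nonempty subset of $\CP$, would force $\Stab_G(\Delta^x)=M^x$. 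Equating the two stabilizers would then yield $\phi(M)=M^x$.

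The main (mild) obstacle will be justifying that the setwise stabilizer of $\Delta^x$ in $G$ is exactly $M^x$: one needs that the only overgroups of a maximal subgroup are itself and $G$, and to rule out $G$ using that $G$ is transitive on $\CP$, so the only $G$-invariant subsets are $\emptyset$ and $\CP$, whereas $\Delta\subseteq\CP\setminus\{M\}$ is a proper nonempty subset. Everything else follows directly from $\td\phi$ being a homomorphic transport of the $G$-action.
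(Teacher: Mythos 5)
Your proposal is correct and follows essentially the same route as the paper: both arguments rest on the observations that $\td\phi$ intertwines the conjugation action via $\phi$, that $\td\phi(\Delta)$ and $\Delta^x$ are orbits of $\phi(M)=M^x$ (hence equal or disjoint), and that the setwise stabilizers $\Stab_G(\td\phi(\Delta))=\phi(M)$ and $\Stab_G(\Delta^y)=M^y$ pin down which block, if any, $\td\phi(\Delta)$ can be. The only difference is organizational: the paper proves the converse of (ii) in contrapositive form using that distinct blocks have distinct stabilizers, while you derive $\phi(M)=M^x$ directly from the stabilizer comparison; the substance is the same.
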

\begin{proof} i) Suppose that $\phi(M)=M^x$ and $\td\phi(\Delta)\cap\Delta^x\neq \emptyset$, we show $\td\phi(\Delta)=\Delta^x$. Since $\Stab_G(\Delta)=M$, we have $\Stab_G(\td\phi(\Delta))=\phi(M)=M^x$. Notice that $\Delta^x=\{M^{g_0gx}:g\in M\}=\{M^{g_0xg^x}:g\in M\}=\{M^{g_0xh}:h\in M^x\}$, so $\Stab_G(\Delta^x)=M^x$. Pick $u\in\td\phi(\Delta)\cap\Delta^x$, we have $\{u^h:h\in M^x\}\subset \td\phi(\Delta)\cap\Delta^x$. Since $M^x$ acts transitively on $\Delta^x$, we obtain $\td\phi(\Delta)=\Delta^x=\{u^h: h \in M^x\}$.

ii) If $\phi(M)=M^x$ and $\td\phi(\Delta)=\Delta^x$ then it is clear that $\td\phi\in\Aut(\CD)$. For the converse, it suffices to show  that  $\phi(M)=M^x$ and $\td\phi(\Delta)\neq\Delta^x$ imply $\td\phi\not\in\Aut(\CD).$ We do this by proving that $\td\phi(\Delta)\not\in\CB$.

Let $T$ be a (right) transversal of $M$ in $G$. Then $\CP=\{M^y:y \in T\}$, the set of all distinct conjugates of $M$ in $G$. Since $\Stab_G(\Delta^y)=M^y$ and $\CB=\{\Delta^y:y\in T\}$, all blocks have pairwise distinct stabilizers. Since $\Stab_G(\td\phi(\Delta))=M^x$ and $\td\phi(\Delta)\neq\Delta^x$, we obtain $\td\phi(\Delta)\not\in\CB$.
\end{proof}

Now we define
$$AD(G):=\{\td\phi: \phi\in\Aut(G) \mbox{ such that } \phi(M)=M^x \mbox{ and } \td\phi(\Delta)=\Delta^x \mbox{ for some } x\in G\}.$$

By Lemma \ref{lem:Med1Aut}, $AD(G)$ is the set of all automorphisms of $G$ that can be lifted to be automorphisms of $\CD$. Clearly we have:

\begin{theorem}
\label{thm:AD1}
$G\cong \td{G}\leq AD(G)\leq \Aut(\CD)$.
\end{theorem}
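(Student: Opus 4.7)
The plan is to verify the chain $G\cong \td G\leq AD(G)\leq \Aut(\CD)$ in three pieces. The isomorphism $G\cong \td G$ is exactly Lemma \ref{lem:embedG}, and the set-theoretic inclusion $AD(G)\subseteq \Aut(\CD)$ is Lemma \ref{lem:Med1Aut}(ii). What remains is to show $\td G\subseteq AD(G)$ and that $AD(G)$ is actually a subgroup of $\Aut(\CD)$, not merely a subset.

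For $\td G\subseteq AD(G)$, I would fix $x\in G$ and consider the inner automorphism $\phi_x\in\Aut(G)$ given by $\phi_x(h)=x^{-1}hx$. Then $\phi_x(M)=M^x$, and using $\Delta=\{M^{g_0h}:h\in M\}$ a direct computation yields
\[
\td{\phi_x}(\Delta)=\{\phi_x(M^{g_0h}):h\in M\}=\{M^{g_0hx}:h\in M\}=\Delta^x,
\]
so $\phi_x$ satisfies the defining condition of $AD(G)$ with witness $x$. The same pointwise calculation on an arbitrary point $M^g\in\CP$ and an arbitrary block $\Delta^g\in\CB$ shows that $\td{\phi_x}$ and $\td x$ coincide on both the point set and the block set, hence $\td x=\td{\phi_x}\in AD(G)$.

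For the subgroup assertion I need closure under composition and inverses. If $\phi,\psi\in\Aut(G)$ have witnesses $x,x'\in G$, a direct manipulation gives $(\phi\psi)(M)=\phi(M^{x'})=\phi(M)^{\phi(x')}=M^{x\phi(x')}$, and the analogous bookkeeping on $\Delta$ yields $\td{\phi\psi}(\Delta)=\Delta^{x\phi(x')}$; thus $y:=x\phi(x')$ serves as a witness for $\phi\psi$. A symmetric calculation shows that $\phi^{-1}(x^{-1})$ is a witness for $\phi^{-1}$. The one auxiliary point to verify is that $\phi\mapsto\td\phi$ respects composition, i.e.\ $\td{\phi\psi}=\td\phi\,\td\psi$, which is immediate from the pointwise and setwise definitions. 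The main obstacle is purely notational: one must pin down a consistent convention for conjugation when passing from $\phi\in\Aut(G)$ to the action of $\td\phi$ on $\CP$ and $\CB$, after which every step reduces to a short calculation inside $G$. No input beyond Lemmas \ref{lem:embedG} and \ref{lem:Med1Aut} is required.
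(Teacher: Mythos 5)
Your proof is correct and follows the same route the paper intends: the paper dispatches this theorem with ``Clearly we have,'' relying on Lemma \ref{lem:Med1Aut} for $AD(G)\subseteq\Aut(\CD)$ and leaving implicit that inner automorphisms witness $\td G\subseteq AD(G)$ and that $AD(G)$ is closed under composition and inversion. You simply supply those routine verifications (the computation $\td{\phi_x}(\Delta)=\Delta^x$ and the witness bookkeeping $x\phi(x')$ for products), all of which check out.
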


\begin{remark}
{\rm
From the proof of Lemma \ref{lem:Med1Aut}, if $\phi(M)=M^x$ then $\td\phi(\Delta)$ is always an orbit of $M^x$, so $\td\phi(\Delta)^{x^{-1}}$ is a nontrivial orbit of $M$ (in $\Omega$). The following hold.
\begin{itemize}
\item[i)] If $M$ has only one orbit of size $|\Delta|$, then $\td\phi\in\Aut(\CD)$ iff $\phi(M)=M^x$.
\item[ii)] Suppose that $G$ has only one conjugacy class of maximal subgroups isomorphic to $M$, and let $\mathfrak{S}_{|\Delta|}$ be the set of all orbits of $M$ having the same size $|\Delta|$. Then $\Aut(G)$ acts on $\mathfrak{S}_{|\Delta|}$ by sending $\Delta$ to $\td\phi(\Delta)^{x^{-1}}$ where $\phi(M)=M^x$ for all $\phi\in\Aut(G)$. Clearly the inner automorphism group 
    is in the kernel of this action.
    So the outer automorphism group 
    of $G$ acts on $\mathfrak{S}_{|\Delta|}$.
\item[iii)] If $G$ has only one conjugacy class of maximal subgroups isomorphic to $M$ and $M$ has only one orbit of size $|\Delta|$, then $\Aut(G)\cong AD(G)\leq \Aut(\CD)$.
\end{itemize}
}
\end{remark}

\begin{remark}
{\rm Since $G$ acts primitively on $\CP=\Omega$ and $\CP_I$ is a partition of $\CP$, it is clear that $I_x$ in Definition \ref{def:Ix+SIx} is trivial in Method 1.
}
\end{remark}

\begin{example}
{\rm
i) Let $G=\PSL(2,27)$ and $M\cong D_{26}$ a dihedral group of order $26$. Under the action of $M$ on $\CP$, we have $22$ orbits in which there are $13$ orbits of length $13$ and $8$ orbits of length $26$. The orbits of length $13$ give $1$-$(378,13,13)$ designs. Twelve of these designs have $\Aut(\CD)\cong G$, and the last one has $\Aut(\CD)\cong G{.}6\cong \Aut(G)$. Since $G$ has only one conjugacy class of maximal subgroups isomorphic to $D_{26}$, see \cite[Page 18]{atlas}, and twelve designs have $\Aut(\CD)\cong G$, the outer automorphism group $C_6$ of $G$ acting on $13$ orbits of length $13$ has one fixed point. Therefore, for the last design we have $\Aut(\CD)=AD(G)\cong\Aut(G)$.

ii) Let $G=A_6$ and $M=A_5$ (notice that $G$ has two conjugacy classes of maximal subgroups
isomorphic to $A_5$). The action of $M$ on $\mathcal{P}$ 
 has only two orbits, one of length $1$ and the other of length $5$. The second orbit provides a $1$-$(6,5,5)$ design $\CD_1$, with $\Aut
(\CD_1)=AD(G)=A_6.2_1=S_6$.
Notice that
$\Aut(G)=A_6.2^2$
where $2^2=V_4=\{e,2_1,2_2,2_3\}$, see \cite[Page 4]{atlas}. 
Only the outer automorphism $2_1$ fixes the conjucgacy class of $M$, the other two involutory outer
automorphisms of $G$ send $M$ to a nonconjugate copy of $M$ in
$G$.

iii) Let $G=A_6$ and $M\cong S_4$ 
(notice that $G$ has two conjugacy classes of maximal subgroups
isomorphic to $S_4$). Under the action of $M$, $\mathcal{P}$ 
 splits into $3$ orbits whose lengths are $1$, $6$ and $8$. Third orbit of length $8$
provides  a $1$-$(15,8,8)$ design $\CD_2$, with $\Aut (\CD_2)=A_8\gneq
AD(G)=A_6.2_1=S_6$.
Notice that only the outer automorphism $2_1$ fixes the conjugacy class of $M$, the other two
involutory outer automorphisms of $G$ send $M$ to a non-conjugate copy
of $M$ in $G$, see \cite[Page 4]{atlas}.

iv) Let $G=A_9$ and $M \cong \PSL(2,8){:}3$ of index $120$. Under the action of $M$, $\mathcal{P}$ 
 has three orbits whose lengths are $1$, $56$ and $63$. The second orbit of length $56$ provides a $1$-$(120, 56, 56)$ design $\CD$
with $\Aut(\CD) =O_8^+(2){:}2\gneq  AD(G)\cong G$, while
$\Aut(G)=A_9.2=S_{9}$. 
%
The involutory outer automorphism of $G$ is not lifted to $\Aut(\CD)$ since it sends
$M$ to a non-conjugate copy of $M$ in $G$, see \cite[Page 37]{atlas}.

}
\end{example}

\section{$\Aut(\CD)$: Method 2}
\label{sec:Method2}

Consider the $1$-$(|g^G|,|g^G\cap M|,1_M^G(g))$ design $\CD$ constructed from a
finite simple group $G$ and some fixed $g\in M^\times$ by using Method 2. In this section  we prove some general results regarding the structure of
$\Aut(\CD)$ and its relation with $\Aut(G).$

First we study the automorphisms in $\Aut(G)$ that could be naturally lifted to be automorphisms of $\CD$.
For $\phi\in\Aut(G)$, we define the map $\td\phi$ by $\td{\phi}(u):=\phi(u)$ and $\td{\phi}(B):=\{\phi(v): v\in B\}$ for
all points $u\in g^G=\CP$ and blocks $B\in\CB$ of $\CD$. We shall find the conditions on $\phi$ such that $\td\phi\in\Aut(\CD)$. 

The condition $\phi(g)\in g^G$ is a must to preserve the point set $\CP$. So we consider $\phi\in\Aut(G)$ such that $\phi(g)\in g^G$. For a block $B=g^G\cap M^x\in\CB$, we have $\td\phi(B)=\phi(g)^G\cap \phi(M)^{\phi(x)}=g^G\cap \phi(M)^{\phi(x)}$. Thus $\Stab_G(\td\phi(B))=\phi(M)^{\phi(x)}$ by the maximality of $\phi(M)$ in $G$.  Since $\Stab_G(B)=M^y$ for every bock $B=g^G\cap M^y\in\CB$, we obtain the following:

\begin{lemma}
\label{lem:Med2Aut}
For $\phi\in\Aut(G)$, $\td\phi\in \Aut(\CD)$ iff $\phi(g)\in g^G$ and $\phi(M)=M^x$ for some $x\in G$.
\end{lemma}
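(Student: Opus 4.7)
The plan is to establish the equivalence by carefully using the stabilizer computation sketched in the paragraph preceding the lemma: for any block $B = g^G\cap M^y$, one has $\Stab_G(B) = M^y$ thanks to the maximality of $M^y$ in $G$. Once this identity is in hand, both directions of the iff are short formal manipulations.

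For the direction ($\Leftarrow$), I would assume $\phi(g)\in g^G$ and $\phi(M) = M^x$ for some $x\in G$. The first condition gives $\phi(g^G) = g^G$, so $\td\phi$ preserves the point set $\CP$. For a typical block $B = g^G\cap M^y$ I would compute
$$\td\phi(B) \;=\; \phi(g^G)\cap \phi(M^y) \;=\; g^G\cap M^{x\phi(y)},$$
which again lies in $\CB$. Hence $\td\phi$ maps $\CB$ into $\CB$, and being a bijection that preserves incidence it belongs to $\Aut(\CD)$.

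For the direction ($\Rightarrow$), I would assume $\td\phi\in\Aut(\CD)$. Since $\td\phi$ permutes $\CP = g^G$, the element $\phi(g)$ lies in $g^G$. Next I would apply $\td\phi$ to the block $B_0 = g^G\cap M$, which is nonempty because $g\in M^\times$. The image $\td\phi(B_0) = g^G\cap \phi(M)$ must be a block of $\CD$, so $g^G\cap \phi(M) = g^G\cap M^z$ for some $z\in G$. Noting that $\phi(M)$ is itself maximal in $G$, I would then apply the stabilizer identity to both descriptions of this single block and conclude $\phi(M) = \Stab_G(g^G\cap\phi(M)) = \Stab_G(g^G\cap M^z) = M^z$.

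The only point that demands real work is the stabilizer claim $\Stab_G(g^G\cap M^y) = M^y$. The inclusion $M^y\leq\Stab_G(g^G\cap M^y)$ is automatic, since $M^y$ normalizes itself and acts by conjugation on $g^G$. By the maximality of $M^y$, the reverse inclusion reduces to excluding the possibility $\Stab_G(g^G\cap M^y) = G$. In that case $g^G\cap M^y$ would be a $G$-invariant subset of the conjugacy class $g^G$, and hence (being nonempty) equal to $g^G$ itself; but then $g^G\subseteq M^y$, so the normal closure $\langle g^G\rangle$ would sit inside $M^y$. Since $G$ is simple and $g\neq 1$, this normal closure is all of $G$, contradicting $M^y\lneq G$. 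This is where simplicity of $G$ enters, and once it is settled both directions of the iff fall out cleanly.
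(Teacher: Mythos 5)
Your proposal is correct and follows essentially the same route as the paper: identify each block $g^G\cap M^y$ by its stabilizer $M^y$ (obtained from the maximality of $M^y$, with simplicity of $G$ ruling out the stabilizer being all of $G$), then read off both directions of the equivalence. The paper states the stabilizer computation as a one-line assertion in the paragraph before the lemma; you merely make the simplicity step explicit, which is a detail, not a different method.
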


Now we define
$$AD(G):=\{\td\phi:\phi\in\Aut(G) \mbox{ such that } \phi(g)\in g^G \mbox{ and }\phi(M)=M^x \mbox{ for some } x\in G\}.$$

By Lemma \ref{lem:Med2Aut}, $AD(G)$ is the set of all automorphisms of $G$ that could be lifted to be automorphisms of $\CD$.

\begin{theorem}
\label{thm:AD2}
$G\cong \td{G}\leq AD(G)\leq \Aut(\CD)$.
\end{theorem}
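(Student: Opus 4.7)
The plan is to verify the chain of inclusions from left to right, relying on the two preceding lemmas. The first isomorphism $G\cong\td G$ is already established in Lemma \ref{lem:embedG} (the conjugation action of $G$ on $\CD$ being faithful because $G$ is simple), and the outer inclusion $AD(G)\leq\Aut(\CD)$ is immediate from Lemma \ref{lem:Med2Aut}, since every element of $AD(G)$ is by definition an $\Aut(G)$-image satisfying the conditions that lemma identifies as sufficient. The only nontrivial step is the middle inclusion $\td G\leq AD(G)$.

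For that middle inclusion, the idea is to realize each $\td x\in\td G$ as the lift of the corresponding inner automorphism $\iota_x\in\Aut(G)$, where $\iota_x(y)=x^{-1}yx$. First I would check that $\iota_x$ satisfies both defining conditions of $AD(G)$: indeed $\iota_x(g)=g^x\in g^G$ trivially, and $\iota_x(M)=M^x$ is a conjugate of $M$ in $G$. Hence $\td{\iota_x}\in AD(G)$. Next I would compare $\td{\iota_x}$ with $\td x$ on points and blocks: on a point $u\in g^G=\CP$ we have $\td{\iota_x}(u)=\iota_x(u)=u^x=\td x(u)$, and on a block $B=g^G\cap M^y$ we have $\td{\iota_x}(B)=\{\iota_x(v):v\in B\}=B^x=\td x(B)$. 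Thus $\td{\iota_x}=\td x$ as elements of $\Aut(\CD)$, so $\td G\subseteq AD(G)$.

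Finally I would note, for completeness, that $AD(G)$ is actually a subgroup of $\Aut(\CD)$ (not merely a subset). Closure under composition uses that if $\phi_1(g),\phi_2(g)\in g^G$ and $\phi_i(M)$ are both $G$-conjugates of $M$, then $\phi_1\phi_2(g)=\phi_1(g^h)=\phi_1(g)^{\phi_1(h)}\in g^G$ and $\phi_1\phi_2(M)=\phi_1(M^x)=\phi_1(M)^{\phi_1(x)}$ is again a $G$-conjugate of $M$; closure under inverses is analogous. Combining the three inclusions proves the theorem.

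I do not expect any serious obstacle here: the only conceptual point is the identification of the inner automorphism $\iota_x$ with the design automorphism $\td x$, and this is a direct unwinding of the definitions together with the fact, already embedded in Lemma \ref{lem:Med2Aut}, that conjugation by $G$ manifestly preserves the point set $g^G$ and permutes the blocks $\{g^G\cap M^y:y\in G\}$.
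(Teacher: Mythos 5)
Your proposal is correct and matches the paper's (implicit) argument: the paper states Theorem \ref{thm:AD2} without proof, treating it as immediate from Lemma \ref{lem:embedG}, Lemma \ref{lem:Med2Aut}, and the observation that each $\td x$ is the lift of the inner automorphism $\iota_x$, which trivially satisfies $\iota_x(g)\in g^G$ and $\iota_x(M)=M^x$. Your write-up simply makes these steps (and the subgroup closure of $AD(G)$) explicit.
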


\begin{remark}
{\rm
If $g^G$ is the unique conjugacy class of elements of
order $n$ in $G$ and $G$ has only one conjugacy class of maximal
subgroups isomorphic to $M$, then clearly $\Aut(G)\cong AD(G)\leq
\Aut (\CD).$
}
\end{remark}

\begin{remark}
{\rm
Except for all automorphisms of $G$ that are lifted to automorphisms of $\CD$, the rest of $\phi\in \Aut(G)$, i.e. either $\phi(g^G)\neq g^G$ or $\phi(M)\neq M^x$ for all $x\in G$, can be studied as follows. Suppose that the order $\ord(\phi)=t$. The $1$-design $\CD^*$ with the point set $\cup_{k=1}^t\phi^k(g^G)$ and the block set $\{{\phi}^k(g^G\cap M^x):x\in G, 1\leq k\leq t\}$ receives $\phi \in \Aut(\CD^{*})$.
}
\end{remark}

\begin{lemma}
 $S(I)\cap AD(G)=\{\Id_G\}$.
\end{lemma}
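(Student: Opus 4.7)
The plan is to take $\td\phi\in S(I)\cap AD(G)$ and show that the underlying $\phi\in\Aut(G)$ is forced to be the identity of $G$; once $\phi=\Id_G$, the corresponding $\td\phi$ is the identity of $\Aut(\CD)$. By Lemma~\ref{lem:SI_prop}(ii), membership of $\td\phi$ in $S(I)$ is equivalent to $\phi$ fixing every block setwise, i.e.\ $\phi(g^G\cap M^x)=g^G\cap M^x$ for every $x\in G$.

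The first substantive step is to promote ``$\phi$ fixes every block'' to ``$\phi$ fixes every conjugate of $M$''. Here I would invoke the observation used in the proof of Lemma~\ref{lem:Med2Aut}: maximality of $M$ in $G$ (together with simplicity, which rules out $g^G\subseteq M$ and hence rules out $\Stab_G(g^G\cap M)=G$) forces $\Stab_G(g^G\cap M^x)=M^x$, so the map $M^x\mapsto g^G\cap M^x$ is a bijection from the set of $G$-conjugates of $M$ onto the block set $\CB$. Hence $\phi(M^x)=M^x$ for every $x\in G$. In particular $\phi(M)=M$, and combining this with $\phi(M^x)=M^{\phi(x)}$ forces $\phi(x)x^{-1}\in N_G(M)=M$ for every $x\in G$.

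Define $f:G\to M$ by $f(x):=\phi(x)x^{-1}$. Expanding $\phi(xy)=\phi(x)\phi(y)$ produces the identity
\[
f(xy)=f(x)\cdot xf(y)x^{-1}.
\]
Because $f(xy),f(x)\in M$, this forces $xf(y)x^{-1}\in M$ for all $x,y\in G$; equivalently, $f(y)\in\bigcap_{x\in G}x^{-1}Mx$, which is the core of $M$ in $G$. Simplicity of $G$ makes this core trivial, so $f\equiv 1$, whence $\phi=\Id_G$. The main technical obstacle is the bijection between conjugates of $M$ and blocks, which rests on the maximality-plus-simplicity argument already appearing in Lemma~\ref{lem:Med2Aut}; once that is in place, the short cocycle-style manipulation together with the triviality of $\mathrm{Core}_G(M)$ finishes the proof cleanly.
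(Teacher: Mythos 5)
Your proof is correct and follows essentially the same route as the paper's: both arguments reduce the claim to showing that $\phi(y)y^{-1}$ lies in every $G$-conjugate of $M$, and then invoke the simplicity of $G$ to conclude that $\bigcap_{x\in G}M^x$ is trivial. The paper gets there a bit more directly, computing $\td\phi(B^{xy})=(B^x)^{\phi(y)}=B^{xy}$ and reading off $\phi(y)y^{-1}\in\Stab_G(B^x)=M^x$, whereas you first establish $\phi(M^x)=M^x$ for all $x$ and then run the cocycle identity; the substance is the same.
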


\proof It suffices to show that if $f\in AD(G)$ fixes all blocks
of $\CD$ then $f=\Id_G$. It is clear that $\Stab_G(g^G\cap M)=M$ and
$\Stab_G(B^x)=\Stab_G(B)^x$ for all blocks $B$, $x\in G$. For all
$x,y\in G$ we have $f(B^{xy})=B^{xy}$. Since
$f(B^{xy})=f(B^x)^{f(y)}=(B^x)^{f(y)}$, we have $f(y)y^{-1}\in
\Stab_G(B^x)$ for all $x,y\in G$. Therefore, $f(y)y^{-1}\in\cap_{x\in
G}M^x\unlhd G$. Since $G$ is simple, it forces $f(y)=y$ for all
$y\in G$. Thus, $f=\Id_G.$ \eproof

\begin{remark}
{\rm
If $\lambda=1$ then all blocks are
pairwise disjoint. It is easy to see that $\Aut(\CD)\cong S_k\wr S_b=S_I{:}S_b$ where $b$
is the number of blocks. Thus we are interested in studying the $1$-$(v,k,\lambda)$ designs constructed by Method $2$ with $\lambda\geq 2$.
}
\end{remark}

\begin{example}
\label{ex:PSL29}
{
Apply Method 2 to  $G=\PSL(2,9)\cong A_6$ with $M\cong\PGL(2,3)\cong S_4$ and $g^G =  2A$ is the unique conjugacy class of involutions in $A_6$.
Here $$v=|g^G|=45,\ b=|G/M|=15,\ \lambda= 1_M^G(g) = (1_a+5_a+9_a)(g)=3$$ and $k=(\lambda\times v)/b= 3\times 45/15=9.$

So  we get a  $1$-$(45,9,3)$ design $\CD$. For every point $x\in g^G$
the intersection of the three blocks containing $x$ has size 3.
Therefore,  $\CD_I$ is a symmetric $1$-$(15,3,3)$ design and $S(I)={S_3}^{15}\lhd\Aut(\CD)$. Together with
$AD(G)\cong G{:}2_1\lneq \Aut(G),$ the automorphism group of $\CD$
contains ${S_3}^{15}{:}(G{:}2),$ whose order is exactly the group's order obtained
from Magma. Thus, we have $\Aut(\CD)\cong  {S_3}^{15}{:}S_6$ and
$\Aut (\CD_I)\cong \Aut (\CD)/S(I)\cong G{:}2_1\cong S_6\not\cong \PGL(2,9).$

\medskip
\textit{Detailed computations:} Consider $S_4$ embedded into $A_6$ as follows.
Let $S_4$ act on a size $4$ subset $S$ of $[1,6]$ and let $\{i,j\}=[1,6]-S$.
If $x\in S_4$ is even, keep it the
same in $A_6$; otherwise, if $x$ is odd, take its image equal to
$x(ij)$. Denote this embedding 
corresponding to $(ij)$ by $M_{ij}$.
Set $b_{ij}:=(12)(34)^{A_6}\cap M_{ij}$. Then, the intersection of
any pair $b_{ij}\neq b_{lk}$ is known as follows:
$$b_{ij}\cap b_{lk}=\left\{\begin{array}{ll} \{(ij)(rs),(lk)(rs),(ij)(lk)\}& \mbox{ if } 
\exists\ r,s: \{i,j,l,k,r,s\}=[1,6],\\ \emptyset & \mbox{ otherwise.}\end{array}\right.$$
Fix a block $b_{ij}$, there are ${4 \choose 2}=6$
blocks having nontrivial intersections with it. Thus, the size of $I_x$ is 3, and a symmetric group $S_3$ acts on each intersection as discussed above. These 15 intersections $I_x$ are transitively acted by $\td{G}\leq AD(G)$.

Notice that the other maximal subgroup $M_2\cong S_4$ of
$G=A_6$ is generated by $(1,2,3)(4,5,6)$,
$(1,4)(3,5)$.
}
\end{example}

\subsection{The stabilizer $\Stab_G(I_x)$ in $G$ of a point $I_x\in \CP_I$ of $\CD_I$.}
Fix $x\in \CP=g^G$ in this subsection. Let $B_i:=M^{g_i}\cap g^G$ for $i\in [1,\lambda]$ be all $\lambda=1_M^G(g)$ distinct blocks containing $x$. We define
$$A_x:=\cap_{i=1}^\lambda\ M^{g_i}.$$

\begin{lemma}
\label{lem:Stab_Ix}
Set $S_x:=\Stab_G(I_x). $ The following hold.
\begin{itemize}
\item[i)] $I_x=A_x\cap g^G$ and $S_x=N_G(A_x)$.
\item[ii)] $C_G(y)A_x\leq S_x$ for all $y\in I_x$.
\item[iii)] Let $H_x:=\la C_G(y):y\in I_x\ra$. Then $H_x$ and $H_xA_x$ are normal in $S_x$.
\item[iv)] $|S_x|=|C_G(x)||I_x|$. Furthermore, $I_x=x^{S_x}$ is a conjugacy class of $S_x$.
\end{itemize}
\end{lemma}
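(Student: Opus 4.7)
The plan rests on one preliminary observation: since $M$ is maximal and stabilizes its own block, distinct conjugates of $M$ give distinct blocks (as noted in the proof of Lemma \ref{lem:Med2Aut}), so the blocks through a point $y\in g^G$ correspond bijectively with the conjugates of $M$ containing $y$. Combined with the fact that $I_y=I_x$ for every $y\in I_x$, this shows that the set of conjugates of $M$ containing any $y\in I_x$ is precisely $\{M^{g_1},\dots,M^{g_\lambda}\}$. This single fact drives everything.

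For (i), the equality $I_x=A_x\cap g^G$ is immediate from $B_i=M^{g_i}\cap g^G$. The inclusion $N_G(A_x)\le S_x$ is trivial since $g^G$ is closed under $G$-conjugation, so $I_x^h=A_x^h\cap g^G=I_x$ whenever $h\in N_G(A_x)$. Conversely, if $h\in S_x$ then $x^h\in I_x$, and by the preliminary observation conjugation by $h$ must permute $\{M^{g_1},\dots,M^{g_\lambda}\}$; intersecting gives $A_x^h=A_x$. For (ii), $A_x\le S_x$ is automatic; and for $y\in I_x$ and $h\in C_G(y)$, conjugation by $h$ fixes $y$ and hence permutes the $\lambda$ blocks through $y$, stabilizing $I_y=I_x$. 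Thus $C_G(y)\le S_x$, and since $A_x\trianglelefteq S_x$, the product $C_G(y)A_x$ is a subgroup of $S_x$. For (iii), given any $h\in S_x$ and $y\in I_x$ one has $h\,C_G(y)\,h^{-1}=C_G(y^h)$ with $y^h\in I_x$, so conjugation by $h$ permutes the generators of $H_x$; hence $H_x\trianglelefteq S_x$, and $H_xA_x\trianglelefteq S_x$ as well.

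For (iv), I first verify that $S_x$ acts transitively on $I_x$: given $y,z\in I_x$, pick $h\in G$ with $y^h=z$; since the blocks through $z$ are the $h$-images of those through $y$ and both sets coincide with $\{B_1,\dots,B_\lambda\}$, the element $h$ permutes these $\lambda$ blocks and hence lies in $S_x$. By orbit-stabilizer, noting that $\Stab_{S_x}(x)=C_G(x)\cap S_x=C_G(x)$ by (ii), one obtains $|S_x|=|C_G(x)|\,|I_x|$. Finally, $x\in\cap_i\, M^{g_i}=A_x\le S_x$, so $I_x=x^{S_x}$ is genuinely an $S_x$-conjugacy class. The main obstacle is the reverse inclusion $S_x\le N_G(A_x)$ in (i): it requires reducing the incidence condition "$h$ stabilizes the set $I_x$" to the group-theoretic condition "$h$ permutes the conjugates $M^{g_i}$", and this reduction is exactly what the maximality of $M$ buys; once it is in place, parts (ii)--(iv) follow by routine normality and orbit-stabilizer arguments.
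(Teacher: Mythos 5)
Your proof is correct and follows essentially the same route as the paper: reduce the stabilization of $I_x$ to the permutation of the blocks $B_i$ (equivalently, of the conjugates $M^{g_i}$, via their being the block stabilizers), deduce $S_x=N_G(A_x)$, and count $|S_x|$ by identifying which cosets of $C_G(x)$ land in $S_x$. Your orbit--stabilizer phrasing of (iv) is just a repackaging of the paper's explicit coset decomposition $S_x=\bigcup_{x^{g_i}\in I_x}C_G(x)g_i$, so there is nothing substantively different to flag.
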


\proof
i) We have $I_x=\cap_{i=1}^\lambda B_i=\cap_{i=1}^\lambda M^{g_i}\cap g^G=A_x\cap g^G$. Since $\{I_y: i\in [1,t]\}$ partition the point set $g^G$ of $\CD$ and preserve the block set, $S_x$ acts 
 on $\{B_i: i\in [1, \lambda]\}$. As $g^G$ is invariant under conjugate action, $S_x$ normalizes $A_x$. So $S_x\leq N_G(A_x)$. Since $I_x=A_x\cap g^G$, the set of all elements of $A_x$ conjugate to g in $G$, $I_x$ is invariant under the conjugate action of $N_G(A_x)$. So $N_G(A_x)\leq S_x$.

ii) For $y\in I_x$, by design properties of $\CD$, $\Stab_G(y)=C_G(y)$ acts 
 on the set $\{B_i: i\in [1, \lambda]\}$. Thus, $C_G(y)$ normalizes $A_x$ and $C_G(y)A_x\leq S_x=N_G(A_x)$ by part i).

iii) The normality of $H_x$ follows directly from $C_G(y)^a=C_G(y^a)\leq H_x$ for all $y\in I_x$ and $a\in S_x=\Stab_G(I_x)$.

iv) Let $R_x:=\{g_i: i\in [1, n]\}$ be a right transversal of $C_G(x)$ in $G$. For each $y\in g^G$ there exists uniquely $g_i\in R_x$ such that $x^{g_i}=y$. Thus for all $h\in S_x$, we have $I_x^h=I_x$ and there exists uniquely $g_i\in R_x$ such that $h\in C_G(x)g_i$. Therefore $S_x\subset \bigcup_
{x^{g_i}\in I_x} C_G(x)g_i$. 
For all $g_i\in R_x$ such that $x^{g_i}\in I_x$, we have $I_x^{g_i}=I_x$ since $G$ acts 
 on $\CP_I$. Thus $C_G(x)g_i\subset S_x$ for all $g_i\in R_x$ such that $x^{g_i}\in I_x$, which shows that $S_x=\bigcup_{x^{g_i}\in I_x} C_G(x)g_i$ and $|S_x|=|C_G(x)||I_x|$.
Since $C_G(x)\leq S_x$, $C_{S_x}(x)=C_G(x)$ and $|x^{S_x}|=|S_x/C_G(x)|$. So $x^{S_x}=I_x$.
\eproof

\begin{remark}
{\rm
i) From $S_x=\bigcup_{x^{g_i}\in I_x} C_G(x)g_i$ in the proof of Lemma \ref{lem:Stab_Ix} iv), $\Stab_G(I_x)$ is the maximal subgroup among those $H\leq G$ such that $I_x\subset H$ and $x^H=I_x$.

ii) By Lemma \ref{lem:Stab_Ix} i) $I_x$ is  a union of conjugacy classes in $A_x$, hence $\la I_x\ra\unlhd A_x$. Furthermore, it is clear that $\Stab_G(I_x)=N_G(\la I_x\ra)$.
}
\end{remark}

\subsection{From Mathieu groups to Steiner systems and other $t$-designs}
\label{Mathieu_app}
Here, we shall retrieve the Steiner systems $S(t,t+3,n)$ from the sporadic Mathieu groups $M_n$ for $n=22,23,24$. Furthermore, we discover other $t$-designs whose automorphism groups are isomorphic to the automorphism groups of $S(t,k,n)$. All results are computed by Magma \cite{MAG}.

First, we recall the definition of the dual $\CD^T=(\CP^T,\CB^T)$ of a design $\CD=(\CP,\CB)$ where $\CP^T:=\CB$ and $\CB^T:=\{\beta_x:x\in \CP\}$, where $\beta_x:=\{B\in\CB:x\in B\}$. The results listed in Table \ref{tab:deg_Mathieu} are proceeded by the following two steps:
\begin{itemize}
\item[1.] Obtain reduced designs $\CD_I=(\CP,\CB)$.
\item[2.] Determine the duals  $\CD_I^T=(\CP^T,\CB^T)$ of $\CD_I$. 
\end{itemize}

\begin{table}[!ht]
\caption{Designs constructed from sporadic Mathieu groups.}
\label{tab:deg_Mathieu}
\begin{center}
\begin{tabular}{l|l|l|l|l|l|l}
\hline
\rule{0cm}{0.4cm}
$G$ & $M$ & $\ord(g)$ & $|I_x|$ &
$\CD^T_I$ & $\Aut(\CD^T_I)$ & $\Stab_G(b)$
\rule[-0.1cm]{0cm}{0.4cm}\\
\hline
\rule{0cm}{0.4cm}
$M_{24}$ & $M_{23}$ & $2$ & $15$ &
$S(5,8,24)$ & $M_{24}$ & $2^4{:}A_8$
\rule[-0.2cm]{0cm}{0.4cm}\\
\hline
\rule{0cm}{0.4cm}
$M_{24}$ & $M_{23}$ & $3$ & $2$ &
$5$-$(24,6,16)$ & $M_{24}$ & $(3.A_6){:}2$
\rule[-0.2cm]{0cm}{0.4cm}\\
\hline
\rule{0cm}{0.4cm}
$M_{23}$ & $M_{22}$ & $2$ & $15$ &
$S(4,7,23)$ & $M_{23}$ & $2^4{:}A_7$
\rule[-0.2cm]{0cm}{0.4cm}\\
\hline
\rule{0cm}{0.4cm}
$M_{23}$ & $M_{22}$ & $3$ & $2$ &
$4$-$(23,5,16)$ & $M_{23}$ & $(3{\times}A_5){:}2$
\rule[-0.2cm]{0cm}{0.4cm}\\
\hline
\rule{0cm}{0.4cm}
$M_{22}$ & $\PSL(3,4)$ & $2$ & $15$ &
$S(3,6,22)$ & $M_{22}.2$ & $2^4{:}A_6$
\rule[-0.2cm]{0cm}{0.4cm}\\
\hline
\rule{0cm}{0.4cm}
$M_{22}$ & $\PSL(3,4)$ & $3$ & $2$ &
$3$-$(22,4,16)$ & $M_{22}.2$ & $(3{\times} A_4){:}2$
\rule[-0.2cm]{0cm}{0.4cm}\\
\hline
\end{tabular}

where $b\in \CB^T$.
\end{center}
\end{table}

\begin{remark}
{\rm
Notice that $\Stab_G(B)\cong M\cong \Stab_G(y)$ for all $B\in \CB$ and $y\in \CP^T$. Thus in Table \ref{tab:deg_Mathieu} we only list $\Stab_G(b)$ where $b\in \CB^T$. From these computations, $M_n$ acts transitively, but not primitively, on the block set of the $t$-$(n,t+1,16)$ design where $(n,t)\in\{(24,5),(23,4),(22,3)\}$.
}
\end{remark}

Here we provide Magma commands for the computation of $M_{24}$ with the involutory conjugacy class, the others are similar:

\smallskip
\noindent
$>$ load m24; max:=MaximalSubgroups(G);\\
$>$ $\#$max, [$\#$G/max[i]\` { }order : i in [1..$\#$max]];\\
$>$ M:=max[7]\` { }subgroup; cM:=Classes(M); $\#$cM, [cM[i][1] : i in [1..$\#$cM]];\\
$>$ i:=2; a:=cM[i][3]; P:=a $\widehat{{}}$ G; b:= P meet $\{$x : x in M$\}$; $\#$b;\\
$>$ gblox:=b $\widehat{{}}$ G; ca:=[y : y in gblox $|$ a in y]; Ix:=b;\\
$>$ for y in ca do Ix:=Ix meet y; end for; $\#$Ix;\\
$>$ ptsI:=Ix $\widehat{{}}$ G; bloxI:=$\{\}$;\\
$>$ for x in gblox do\\
for$>$ cx:=[y subset x : y in ptsI]; bx:=$\{\}$;\\
for$>$ for s in [1..$\#$cx] do if cx[s] then bx:=bx join $\{$s$\}$; end if; end for;\\
for$>$ bloxI:=bloxI join $\{$bx$\}$;\\
for$>$ end for;\\
$>$ desI:=Design$<$1,$\#$ptsI$|$bloxI$>$; desI;\\
$>$ dB:=$\{\}$;\\
$>$ for x in [1..$\#$ptsI] do\\
for$>$ cx:=[x in y : y in bloxI]; bx:=$\{\}$;\\
for$>$ for s in [1..$\#$cx] do if cx[s] then bx:=bx join $\{$s$\}$; end if; end for;\\
for$>$ dB:=dB join $\{$bx$\}$;\\
for$>$ end for;\\
$>$ desdB:=Design$<$5,$\#$bloxI$|$dB$>$; desdB;\\
$>$ aut:=AutomorphismGroup(desdB);\\
$>$ "Aut(DIT)"; ChiefFactors(aut); "StabG(b)"; ChiefFactors(Stabilizer(aut,bx));

\section{Designs of $G=\PSL(2,q^2)$ and $M\cong \PGL(2,q)$ for odd $q$ }
\label{sec:SL2q}
Let $G=\PSL(2,q^2)$,  where $q$ is a power of an odd prime $p$. By \cite[Proposition 4.5.3]{Kleidman}, $G$ has two conjugacy classes of maximal subgroups
isomorphic to $\PGL(2,q)$. These two maximal subgroups correspond to two conjugacy classes of unipotent elements in $G$, which are called squared and non-squared. We denote these two subgroups  by $M_1$ and $M_2$ respectively, 
where $M_1$ contains squared unipotent and $M_2$ contains non-squared unipotent elements.

Here we study the designs constructed by Method 2  with $G=\PSL(2,q^2)$, $M=M_i$ for $i=1,2$ and all $g\in M^\times$. By Theorem \ref{thm-method2}, they are $1$-$(|g^G|,|g^G\cap M|,1_M^G(g))$ designs. Since $|G|=\frac{q^2(q^4-1)}{2}$, $|M_i|=q(q^2-1)$, we have $|G:M|=\frac{q(q^2+1)}{2}$.

The following properties of $G$ and $M_i$ are well-known and they could be obtained easily from computation:
\begin{itemize}
\item[i)] An element of $M_i$ is either unipotent or semisimple.
\item[ii)] There is exactly one unipotent conjugacy class in $M_i$, which is either squared or non-squared in $G$. 
\item[iii)] $G$ has exactly one conjugacy class $y^G$ of involutions and $C_G(y)\cong D_{q^2-1}$, a dihedral group of order $q^2-1$. Each $M_i$ has exactly two involutory conjugacy classes $x^{M_i}$ and $C_{M_i}(x)\cong D_{2(q\pm 1)}$.
\item[iv)] The permutation character $1_{M_i}^G$ has been evaluated as follows. For $g\in G$,
$$1_{M_i}^G(g)=\left\{\begin{array}{cl} |G:M_i|&\mbox{ if $g=1$,}
\\q & \mbox{ if $\ord(g)=2$ or $p$},
\\ \frac{q+1}{2} &\mbox{ if $\ord(g)|\ \frac{q-1}{2}$ and $\ord(g)\gneq 2$,}
\\ \frac{q-1}{2} &\mbox{ if $\ord(g)|\ \frac{q+1}{2}$ and $\ord(g)\gneq 2$,}
\\ 0 & \mbox{ otherwise.}
\end{array}\right.$$
\end{itemize}

\medskip
Now we apply these results for each conjugacy with  $G=\PSL(2,q^2)$ and
$M=M_1.$ 
The dual method will give simlar results for $M=M_2$.

\subsection{Designs from involutions $g\in M$}
\label{subsec:involution}

Recall that $G$ has exactly one conjugacy class $g^G$ of involutions, $C_G(g)\cong D_{q^2-1}$, and
$|g^G|= \frac{q^2(q^2+1)}{2}$. Moreover, $C_G(g)$ is known as the normalizer of a maximal split torus of $G$.

Here $M$ has exactly two
involutory conjugacy classes, call them $x_1^M$ and $x_2^M$, where $C_M(x_1)\cong D_{2(q+1)}$ and $C_M(x_2)\cong D_{2(q-1)}$. Moreover, $C_M(x_1)$ and $C_M(x_2)$ are also recognized as the normalizers of two maximal tori, non-split and split respectively. 
As the orders of dihedral groups $C_G(x_i)$ are divisible by 4, we have $Z(C_G(x_i))=\{1,x_i\}$.
We have $M\cap g^G=x_1^M\cup
x_2^M$ and
$$|M\cap
g^G|=|x_1^M|+|x_2^M|=\frac{|M|}{2(q+1)}+\frac{|M|}{2(q-1)}=\frac{q(q-1)}{2}+\frac{q(q+1)}{2}=q^2.$$
So the design constructed by Method 2 is a 1-$(\frac{q^2(q^2+1)}{2},q^2,q)$ design, call it $\CD$.

\begin{lemma}
For all $x\in g^G$ we have $|I_x|=1$  if $q>3$, and $|I_x|=3$ if $q=3$.
\end{lemma}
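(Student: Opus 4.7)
The plan is to apply Lemma \ref{lem:Stab_Ix}(iv), which gives $|I_x|=|S_x|/|C_G(x)|$ with $S_x=N_G(A_x)\supseteq C_G(x)$. Since $|C_G(x)|=q^2-1$ for an involution $x\in g^G$ (as recalled in the list preceding this subsection), the problem reduces to pinning down the subgroup $S_x$ of $G$.

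First I would show that $S_x$ is a \emph{proper} subgroup of $G$. Because $x\in A_x\subseteq M\lneq G$, $A_x$ is a nontrivial proper subgroup of $G$; as $G$ is simple, $A_x$ cannot be normal in $G$, so $S_x=N_G(A_x)\lneq G$. Alternatively, one may note that $|I_x|\le |A_x|\le |M|=q(q^2-1)<q^2(q^2+1)/2=|g^G|$, so $I_x\subsetneq g^G$ and $S_x\lneq G$.

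The key step for $q\ge 5$ is a classification argument. By Dickson's classification of the subgroups of $\PSL(2,q^2)$ (see e.g.\ the tables in \cite{Kleidman}), for odd $q$ with $q^2\ge 25$ the dihedral subgroup $D_{q^2-1}$ — namely the normalizer in $G$ of a split maximal torus — is a \emph{maximal} subgroup of $\PSL(2,q^2)$. Since $C_G(x)\cong D_{q^2-1}$ is contained in $S_x$, and $S_x\lneq G$, maximality forces $S_x=C_G(x)$, which yields $|I_x|=1$ as claimed.

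For $q=3$ we have $G\cong A_6$ and $M\cong \PGL(2,3)\cong S_4$, which is precisely the setting of Example \ref{ex:PSL29}; there a direct computation already gave $|I_x|=3$. Conceptually, the exception arises because the chain $D_8\lneq S_4\lneq A_6$ shows that $D_{q^2-1}=D_8$ fails to be maximal in $\PSL(2,9)$, so $S_x$ grows to $S_4$ and $|S_x|/|C_G(x)|=24/8=3$. The main obstacle I foresee is precisely isolating the correct cut-off in Dickson's theorem: one must verify that $D_{q^2-1}$ is maximal for every odd $q\ge 5$, and one cannot avoid treating $q=3$ separately since the maximality genuinely fails at $q^2=9$.
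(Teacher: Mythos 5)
Your proposal is correct, and it rests on the same two pillars as the paper's own proof — the machinery of Lemma \ref{lem:Stab_Ix} and the maximality of $C_G(x)\cong D_{q^2-1}$ in $\PSL(2,q^2)$ for $q>3$ (a fact the paper also asserts without further justification, so your appeal to Dickson's classification is at worst more explicit) — but your endgame is genuinely different and shorter. The paper only extracts from the maximality that $C_G(x)A_x=C_G(x)$, i.e.\ $A_x\leq C_G(x)$, and then needs a second, element-level argument: it observes that every element of $I_x$ centralizes $A_x$, hence the elements of $I_x$ pairwise commute, and derives a contradiction from the fact that a non-central involution of $D_{q^2-1}$ has non-commuting $C_G(x)$-conjugates inside $I_x$ when $q>3$; it also carries out an orbit computation (two $C_G(x)$-orbits of block sizes $\frac{q\pm1}{2}$) that your route does not need. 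You instead go straight through the counting identity $|S_x|=|C_G(x)|\,|I_x|$ of Lemma \ref{lem:Stab_Ix}(iv): since $C_G(x)\leq S_x=N_G(A_x)\lneq G$ (your simplicity argument for properness is sound), maximality forces $S_x=C_G(x)$ and hence $|I_x|=1$ in one step. This buys brevity and isolates exactly where $q=3$ fails ($D_8$ is not maximal in $A_6$, and indeed $S_x\cong S_4$ gives $24/8=3$, matching Example \ref{ex:PSL29}); what it loses is the extra structural information the paper's proof records along the way ($A_x\leq C_G(x)$ and the orbit decomposition). Both proofs handle $q=3$ by deferring to Example \ref{ex:PSL29}.
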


\proof
For $q=3$, Example \ref{ex:PSL29} was demonstrated the computation in details. Now we suppose that $q>3$.

Let $x\in g^G$ and $B_i:=M^{g_i}\cap g^G$ for $i\in [1,q]$ be all $q$ distinct blocks containing $x$. Recall that $C_G(x)\cong D_{q^2-1}$. First of all, we shall show that $C_G(x)$ acts on the set $\{B_i:i \in [1,q]\}$ into two orbits of sizes $\frac{q-1}{2}$ and $\frac{q+1}{2}$.

If $x\in M^{g_i}$ such that $C_{M^{g_i}}(x)\cong D_{2(q+1)}$ then, since $g^G$ is invariant under conjugate action, we have
$$\begin{array}{ll}
\Stab_{C_G(x)}(B_i)&=\Stab_{C_G(x)}(M^{g_i})\\
&=\Stab_G(M^{g_i})\cap C_G(x)\\
&=M^{g_i}\cap C_G(x)=C_{M^{g_i}}(x).\end{array}$$
So $B_i$ belongs to an orbit of size $\frac{q-1}{2}$. Using the same argument, if $C_{M^{g_i}}(x)\cong D_{2(q-1)}$, then $B_i$ belongs to an orbit of size $\frac{q+1}{2}$.
As $M$ has only two conjugacy classes of involutions, an orbit of the action of $C_G(x)$ on $\{B_i:i \in [1,q]\}$ has size either $\frac{q-1}{2}$ or $\frac{q+1}{2}$.
Since $q>3$, $C_G(x)$ acts on $q$ blocks with no invariant points. 
 Thus our claim holds.

Let $A:=\cap_{i=1}^q\ M^{g_i}$. By Lemma \ref{lem:Stab_Ix}, $C_G(x)A$ is a nontrivial proper subgroup of $G$. Since $C_G(x)\cong D_{q^2-1}$ is maximal in $G$, $C_G(x)A$ must equal $C_G(x)$, which implies $A\leq C_G(x)$. Since $I_x=\cap_{i=1}^q\ B_i=A\cap g^G$ and the above argument works for all elements in $I_x$, all elements in $I_x$ commute.

Suppose that there is $y\in I_x-\{x\}$. Notice that $y\not\in Z(C_G(x))=\{1,x\}$ since  $C_G(x)\cong D_{q^2-1}$. Since $I_x\subset A\leq C_G(x)$ and $C_G(x)$ acts on $I_x$ by conjugation, the conjugacy class of $y$ in $C_G(x)$ is also contained in $I_x$. However, all these elements do not commute in $D_{q^2-1}$ when $q>3$. This completes the proof.
\eproof

\subsection{Designs from nontrivial  unipotents $g\in M$}
\label{subsec:unipotent}

Recall that $g\in M$ is in the squared unipotent conjugacy
class of $G=\PSL(2,q^2)$.  We have $C_G(g)\in \Syl_p(G)$ and
$C_M(g)\in \Syl_p(M)$, thus
$$|g^G|=|G/C_G(g)|=\frac{q^4-1}{2} \mbox{ and } |g^M|=|M/C_M(g)|=q^2-1.$$

Since $M$ has a unique nontrivial unipotent conjugacy class,
$u^G\cap M=u^M$. Hence, we obtain a 1-$(\frac{q^4-1}{2},q^2-1,q)$ design, call it $\CD$.

\begin{lemma}
$|I_x|=q-1$ for all $x\in g^G$. Moreover, $I_x=C_M(x)\cap g^G$ for $x\in M\cap g^G$ and the reduced design $\CD_I$ is a 1-design with parameters $(\frac{(q^2+1)(q+1)}{2},q+1,q)$.
\end{lemma}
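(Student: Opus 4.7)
The plan is to identify $I_x$ explicitly for $x \in g^G$. By $G$-transitivity on $\CP = g^G$, we may assume $x \in M$, so that $M$ itself is one of the $\lambda = q$ conjugates of $M$ containing $x$. Since $C_M(x)$ is the unique Sylow $p$-subgroup of $M$ containing $x$ and every nontrivial element of it is $M$-conjugate to $x$ (as $M$ has a single nontrivial unipotent class, recalled in property ii of Section 5), we have $|C_M(x) \cap g^G| = q-1$. My goal is to prove $I_x = C_M(x) \cap g^G$; the stated parameters of $\CD_I$ will then follow immediately from dividing $|\CP| = (q^4-1)/2$ and $k = q^2-1$ by $|I_x| = q-1$ and keeping $\lambda = q$.

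First I would parameterize the $q$ conjugates of $M$ containing $x$. A direct count using $N_G(M) = M$ together with $M$-transitivity on $g^M = M \cap g^G$ shows that $\{h \in G : x \in M^h\} = M\cdot C_G(x)$, and consequently
\[
\{M^h : x \in M^h\} \;=\; \{M^u : u \in C_G(x)/C_M(x)\},
\]
a set of size $|C_G(x)|/|C_M(x)| = q^2/q = q$, matching the permutation character. The crucial structural input here is that $C_G(x)$ is the full Sylow $p$-subgroup of $\PSL(2,q^2)$ containing $x$, hence elementary abelian of order $q^2$. In particular every $u \in C_G(x)$ centralizes $C_M(x) \le C_G(x)$, so $C_M(x) = u^{-1}C_M(x)u \le M^u$ for every such $u$. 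Therefore $C_M(x) \le A_x$, and intersecting with $g^G$ (together with Lemma \ref{lem:Stab_Ix}(i)) gives $C_M(x) \cap g^G \subseteq I_x$.

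For the reverse inclusion I pass to the natural $G$-action on $\mathbb{P}^1(\mathbb{F}_{q^2})$. Choose coordinates so that the unique fixed point of $x$ is $\infty$; then $C_G(x)$ is the group of translations $z \mapsto z + \alpha$ with $\alpha \in \mathbb{F}_{q^2}$, the subgroup $M$ is the setwise stabilizer of the sub-line $L_0 := \mathbb{F}_q \cup \{\infty\}$, and $C_M(x)$ corresponds to translations with $\alpha \in \mathbb{F}_q$. For $u : z \mapsto z + \alpha$, the conjugate $M^u$ is the stabilizer of the sub-line $L_0 - \alpha$. Let $y \in I_x$. Being unipotent of order $p$ and stabilizing the $(q+1)$-set $L_0$, and since $p \nmid q+1$, the element $y$ must fix a point of $L_0$; as its fixed point on $\mathbb{P}^1(\mathbb{F}_{q^2})$ is unique, this fixed point lies in $L_0$. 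The same argument applied to $L_0 - \alpha$ for any $\alpha \notin \mathbb{F}_q$ places the fixed point inside $L_0 \cap (L_0 - \alpha) = \{\infty\}$. Hence $y$ fixes $\infty$, so $y \in C_G(x)$, and together with $y \in M$ this forces $y \in M \cap C_G(x) = C_M(x)$.

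Combining the two inclusions gives $I_x = C_M(x) \cap g^G$ of size $q-1$, and the reduced design $\CD_I$ has parameters $\bigl(\tfrac{(q+1)(q^2+1)}{2},\, q+1,\, q\bigr)$. The main obstacle I anticipate is the bookkeeping in the parameterization step, namely identifying the $q$ conjugates $M^u$ with the sub-lines $L_0 - \alpha$ through the $x$-fixed point; once this dictionary is set up, the abelianness of $C_G(x)$ yields the first inclusion and the elementary disjointness $\mathbb{F}_q \cap (\mathbb{F}_q - \alpha) = \emptyset$ for $\alpha \notin \mathbb{F}_q$ yields the second.
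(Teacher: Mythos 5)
Your proof is correct, and for the essential (reverse) inclusion it takes a genuinely different route from the paper. The forward inclusion $C_M(x)\cap g^G\subseteq I_x$ is in substance the same in both arguments: you both exploit that $C_G(x)\in\Syl_p(G)$ is abelian of order $q^2$, so that $C_M(x)$ is normalized (indeed centralized) by every $u\in C_G(x)$ and hence lies in every conjugate $M^u$ containing $x$; your explicit identification $\{M^h: x\in M^h\}=\{M^u: u\in C_G(x)\}$ is a clean way to package the paper's transitivity claim for the $C_G(x)$-action on the $q$ blocks. The divergence is in showing $I_x\subseteq C_M(x)$. The paper argues abstractly: a hypothetical $y\in I_x- C_M(x)$ would force $A_x$ to contain two distinct Sylow $p$-subgroups of $M$, whence $A_x\cong\PSL(2,q)$ by King's classification of subgroups of $\PGL(2,q)$ \cite{O.King}, making $A_x$ normal of index $2$ in every $M^{g_i}$ and hence $N_G(A_x)=G$, contradicting simplicity. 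You instead realize $G$ on $\mathbb{P}^1(\F_{q^2})$, identify the $q$ blocks with the stabilizers of the translated sublines $L_0-\alpha$, $\alpha\in\F_{q^2}/\F_q$, and use the elementary facts that a unipotent element has a unique fixed point, that $p\nmid q+1$ forces that fixed point onto each stabilized subline, and that $\F_q\cap(\F_q-\alpha)=\emptyset$ for $\alpha\notin\F_q$. Your version is more elementary and self-contained (no appeal to the subgroup-structure results of \cite{O.King}) at the cost of setting up the coordinate dictionary between conjugates of $M$ and sublines; the paper's version stays inside the abstract framework of Lemma \ref{lem:Stab_Ix} and generalizes more readily to settings without such a concrete permutation model. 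Both yield $|I_x|=q-1$ and the stated parameters by the same division $v/|I_x|$, $k/|I_x|$.
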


\proof
Let $x\in g^G$ and $B_i:=M^{g_i}\cap g^G$ for $i\in [1,q]$ be all $q$ distinct blocks containing $x$. Recall that
$C_G(x)\cap M^{g_i}=C_{M^{g_i}}(x)\in \Syl_p(M^{g_i})$. First we claim that
the action of $C_G(x)$ on $\{B_i\}_{i=1}^q$ is transitive. Since $C_G(x)\not\subset M^{g_i}$, the action is nontrivial.
We have $\Stab_{C_G(x)}(M^{g_i})=C_G(x)\cap \Stab_G(M^{g_i})=C_G(x)\cap M^{g_i}=C_{M^{g_i}}(x)$. Thus the orbit of $M^{g_i}$ has size $|C_G(x)/C_{M^{g_i}}(x)|=q$, which proves the claim.

Since $M^{g_i}$ has a unique unipotent conjugacy class, $C_{M^{g_i}}(x)^\times\subset B_i$. Since $C_G(x)$ is abelian, $C_{M^{g_i}}(x)$ is invariant under this action. So $C_{M^{g_i}}(x)^\times \subset B_i^h$ for all $h\in C_G(x)$. Thus $C_{M^{g_i}}(x)^\times\subset I_x$.

Let $A:=\cap_{i=1}^q M^{g_i}$. Since $C_{M^{g_i}}(x)\leq A$ and $C_{M^{g_i}}(x)\in \Syl_p(M^{g_i})$, we have $C_{M^{g_i}}(x)\in \Syl_p(A)$. Suppose that there is $y\in I_x-C_{M^{g_i}}(x)$. Using the same argument, we obtain $C_{M^{g_i}}(y)^\times \subset I_x$ and $C_{M^{g_i}}(y)\in \Syl_p(A)$. Since $A$ contains at least two distinct Sylow $p$-subgroups and $A\lneq M^{g_i}\cong \PGL(2,q)$, by \cite[Corollary 2.2 and Corollary 2.3]{O.King}, we obtain $A\cong\PSL(2,q)$. Thus $A\lhd M^{g_i}$ for all $i\in[1,q]$ since $|M^{g_i}/A|=2$. This implies that $M^{g_i}\leq N_G(A)$ for all $i$. However, the maximality of $M^{g_i}$ and $q\geq 3$ imply $N_G(A)=G$,  which contradicts the simplicity of $G$. Thus there is no $y\in I_x-C_{M^{g_i}}(x)$ and this completes the proof.
\eproof

\begin{remark}
{\rm
In the above proof,  $|C_G(x)A|=
\frac{q^2(q^2-1)}{2}$ and $\Stab_G(I_x)=N_G(C_{M^{g_i}}(x))$ is a Borel subgroup of $G$, which is maximal in $G$. By Lemma \ref{lem:Stab_Ix}, $\Stab_G(I_x)=C_G(x)A$. Thus, together with the maximality of $\Stab_G(I_x)$, $G$ acts primitively on both points and blocks of $\CD_I$.
}
\end{remark}

\subsection{Designs from semisimples $g\in M$ with $\ord(g)> 2$}
\label{subsec:semisimple}

Since $G=\PSL(2,q^2)$, $g$ is regular and belongs to a maximal split
torus $C_G(g)$ of order $\frac{q^2-1}{2}$ in $G$. Thus, $|g^G|=q^2(q^2+1)$.

Since $\gcd(q-1,q+1)=2$, if $\ord(g)|(q-1)$ then $g$ belongs to a maximal torus of order $q-1$ in $M$ and $|g^M|=q(q+1)=|g^G\cap M|$; otherwise if $\ord(g)|(q+1)$ then $g$ belongs to a maximal torus of order $q+1$ in $M$ and $|g^M|=q(q-1)=|g^G\cap M|$.

So we obtain 1-$(q^2(q^2+1),q(q\pm 1),\frac{q\pm 1}{2})$ designs, call them $\CD^\pm$ respectively.

\begin{lemma}
$I_x=\{x,x^{-1}\}$ for all $x\in g^G$. Then the reduced designs $\CD^\pm_I$ are 1-designs with parameters $(\frac{q^2(q^2+1)}{2},\frac{q(q\pm 1)}{2},\frac{q\pm 1}{2})$ respectively.
\end{lemma}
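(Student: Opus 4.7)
The plan is to establish the set equality $I_x=\{x,x^{-1}\}$ by a two-way inclusion and then read off the parameters of $\CD_I^\pm$ from $|I_x|=2$. For the containment $\{x,x^{-1}\}\subseteq I_x$: each $M^{g_i}$ is a subgroup containing $x$, so $x^{-1}\in M^{g_i}$ and hence $x^{-1}\in A:=\bigcap_{i=1}^{\lambda} M^{g_i}$. Moreover, $x$ lies in the cyclic maximal torus $C_G(x)$ of order $(q^2-1)/2$, and the non-identity coset of $C_G(x)$ in $N_G(C_G(x))\cong D_{q^2-1}$ inverts that torus; hence $x\sim x^{-1}$ in $G$, so $x^{-1}\in g^G$ and therefore $x^{-1}\in A\cap g^G=I_x$. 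Since $\ord(x)>2$ we have $x\neq x^{-1}$, so $|I_x|\geq 2$.

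For the reverse containment, I would invoke Lemma \ref{lem:Stab_Ix}, which gives $I_x=x^{S_x}$ with $S_x:=N_G(A)=\Stab_G(I_x)$ and $|S_x|=|C_G(x)|\cdot|I_x|$. The strategy is to prove that $S_x$ coincides with $N_G(C_G(x))$, which has order exactly $2|C_G(x)|=q^2-1$. Two auxiliary facts drive this: first, $T:=C_G(x)\cap M^{g_i}=C_{M^{g_i}}(x)$ is the unique torus of $M^{g_i}$ of order $q\pm 1$ containing $x$, equivalently the unique cyclic subgroup of $C_G(x)$ of that order, so $T$ is independent of $i$ and $T\leq A$; second, $C_G(x)\leq S_x$ by Lemma \ref{lem:Stab_Ix}(ii), and every element of $N_G(C_G(x))$ permutes the set of conjugates of $M$ containing $x$ (because this set coincides with the set of conjugates of $M$ containing $x^{-1}$), hence normalizes $A$, giving $N_G(C_G(x))\leq N_G(A)=S_x$.

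Since $G$ is simple and $A$ is a proper nontrivial subgroup of $G$, $S_x=N_G(A)\lneq G$. By Dickson's classification of subgroups of $\PSL(2,q^2)$, the dihedral normalizer $N_G(C_G(x))\cong D_{q^2-1}$ is a maximal subgroup of $G$, so the sandwich $N_G(C_G(x))\leq S_x\lneq G$ forces $S_x=N_G(C_G(x))$. Hence $|I_x|=|S_x|/|C_G(x)|=2$, yielding $I_x=\{x,x^{-1}\}$. Dividing $|\CP|=q^2(q^2+1)$ and $|B|=q(q\pm 1)$ by $|I_x|=2$ and keeping $\lambda=(q\pm 1)/2$ then produces the stated parameters for $\CD_I^\pm$.

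The main technical obstacle lies in the appeal to the maximality of $D_{q^2-1}$ in $\PSL(2,q^2)$, which does fail at the smallest values of $q$: for $q=3$, $G\cong A_6$ has $D_8$ properly contained in $S_4$, and in the $-$ case the parameter $\lambda=1$ makes the blocks disjoint so that $|I_x|=|B|=6$ rather than $2$. Thus the statement genuinely requires $q\geq 5$ (or a separate treatment of the $q=3$ exception); for $q\geq 5$ Dickson's theorem supplies the maximality and the argument above applies uniformly.
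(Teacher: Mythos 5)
Your proof is correct (for $q\geq 5$) but reaches the conclusion by a genuinely different route from the paper's. The paper first shows that $T=C_G(x)$ acts transitively on the $\lambda$ blocks through $x$, deduces $TA\lneq G$ from Lemma \ref{lem:Stab_Ix}, places $TA$ inside $N:=N_G(T)\cong D_{q^2-1}$ via King's subgroup-structure results, and then finishes by Weyl-group fusion control: $g^G\cap N=\{x,x^{-1}\}$, so $I_x=A\cap g^G\subseteq N\cap g^G$. You instead pin down the stabilizer exactly: you show $N_G(C_G(x))$ normalizes $A$ (your observation that the set of conjugates of $M$ containing $x$ is inverse-closed, hence stable under the inverting Weyl element, is a clean way to get this), squeeze $N_G(C_G(x))\leq S_x\lneq G$ against the maximality of $D_{q^2-1}$ to conclude $S_x=N_G(C_G(x))$, and then invoke the counting formula $|S_x|=|C_G(x)|\,|I_x|$ of Lemma \ref{lem:Stab_Ix}(iv) to get $|I_x|=2$. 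What your approach buys is that you avoid both the transitivity claim and the fusion-control argument, replacing them with the order formula; what it costs is that you lean on the full maximality of $D_{q^2-1}$ in $\PSL(2,q^2)$ rather than only on the weaker statement that proper overgroups of the cyclic torus lie in its normalizer. Your flag about small $q$ is a legitimate catch: for $q=3$ the minus case has $\lambda=\frac{q-1}{2}=1$, the single block is all of $I_x$ (of size $6$), and the statement fails as written; the paper's own proof also breaks there (its containment $TA\leq N_G(T)$ would force $S_4$ inside $D_8$), but the paper has implicitly excluded $\lambda=1$ designs in an earlier remark, and the plus case at $q=3$ is vacuous since there are no semisimple elements of order greater than $2$ dividing $q-1=2$.
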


\proof
It suffices to prove this lemma for the 1-$(q^2(q^2+1),q(q+ 1),\frac{q+1}{2})$ design $\CD^+$. Let $x\in g^G$ and $B_i:=M^{g_i}\cap g^G$ for $i\in [1,\frac{q+1}{2}]$ be all the $\frac{q+1}{2}$ distinct blocks containing $x$. Since $x$ is regular and contained in the maximal torus $T_i:=C_{M^{g_i}}(x)$, its inverse $x^{-1}$ is also in $T_i$. Since the normalizer $N_{M^{g_i}}(T_i)\cong D_{2(q-1)}$ fuses $x$ to $x^{-1}$, we have both $x,x^{-1}\in B_i$. Thus $x,x^{-1}\in I_x$.

Consider the conjugate action of $T:=C_G(x)$ on $\{B_i:i\in [1, \frac{q+1}{2}]\}$. Recall that $T$ is a maximal split torus of order $\frac{q^2-1}{2}$ in $G$.  We have
$$\Stab_T(B_i)=\Stab_T(M^{g_i})=\Stab_G(M^{g_i})\cap T=M^{g_i}\cap T=T_i$$ of order $q-1$. Thus $T$ acts transitively on $\{B_i:i\in [1, \frac{q+1}{2}]\}$.

Let $A:=\cap_{i=1}^{\frac{q+1}{2}}\ M^{g_i}$. By Lemma \ref{lem:Stab_Ix}, $TA\lneq G$. By the simplicity of $G$ and \cite[Corollary 2.2]{O.King}, $TA\leq N_G(T)=:N\cong D_{q^2-1}$, a maximal subgroup of $G$. Since the Weyl group $N$ of $T$ controls the fusion of all regular elements of $T$ in $G$ and the Lie rank of $G$ is one, we have $g^G\cap N=\{x,x^{-1}\}$. Therefore, $A\cap g^G\subset N\cap g^G=\{x,x^{-1}\}$ which completes the proof.
\eproof

\section*{Acknowledgement}
We would like to thank the referees for their corrections and suggestions. As a result our paper has been improved significantly.

\end{document}